\newtheorem{teo}{Theorem}[section]
\newtheorem{prop}[teo]{Proposition}
\newtheorem{lem}[teo]{Lemma}
\newtheorem{defi}[teo]{Definition}
\renewcommand{\Phi}{\varPhi}
\newcommand{\R}{{\mathbb{R}}}
\newcommand{\C}{{\mathbb{C}}}
\title{\LARGE \bf
On the compatibility between the adiabatic and the rotating wave
approximations in quantum control
}
\author{Nicolas Augier\footnote{CMAP, \'Ecole Polytechnique, Institut Polytechnique de Paris}\,, 
Ugo Boscain\footnote{CNRS}\;$^\S$\,, Mario Sigalotti \footnote{Inria Paris} \footnote{Sorbonne Universit\'e, Inria, Universit\'{e} de Paris, CNRS, Laboratoire Jacques-Louis Lions, Paris, France}}
\begin{document}
\maketitle

\begin{abstract}
In this paper, we discuss the compatibility between the \emph{rotating-wave} and the \emph{adiabatic} approximations for controlled quantum systems.
Although the paper focuses on applications to two-level quantum systems, the main results apply in higher dimension.
Under some suitable hypotheses on the time scales, the two approximations can be combined. As a natural consequence of this, it is possible to design control laws achieving transitions of states between two energy levels of the Hamiltonian that are robust with respect to inhomogeneities of the amplitude of the control input.
\end{abstract}

\section{INTRODUCTION}

An important issue of quantum control is to design explicit control laws for the problem of the single input bilinear Schr\"odinger equation, that is
\begin{equation}\label{single}
i\frac{d\psi}{dt}=\left(H_0+uH_1\right)\psi
\end{equation}
where $\psi$ belongs to the unit sphere in a Hilbert space $\mathcal{H}$.
$H_0$ is a self adjoint operator representing a drift term called \emph{free Hamiltonian}, $H_1$ is a self-adjoint operator representing the control coupling and $u:[0,T] \to \R$, $T>0$.
Important theoretical results of controllability have been proved with different techniques (see \cite{DAlessandro,BeauchardCoron,BCCS} and references therein).
For the problem with two or more inputs, adiabatic methods are a nowadays classical way to get an explicit expression of the controls and can be used under geometric conditions on the spectrum of the controlled Hamiltonian (see \cite{Ensemble,Bos,RouchonSarlette} and references therein).  
They rely on the adiabatic theorem and its generalisations. The adiabatic theorem states in its simplest form that under a separation condition on the energy levels of the controled Hamiltonian, the occupation probabilities of the energy levels are approximately conserved when the controls are slowly varying. 
However, these methods are effective for inputs of dimension at least 2. 
Our aim is then to extend a single-input bilinear Schr\"odinger equation into a two-inputs bilinear Schr\"odinger equation in the same spirit as the Lie-extensions obtained by Sussmann and Liu in \cite{Liu97} and \cite{Sussmann1993}, then to apply the well-known adiabatic techniques to the extended system. The first step of this procedure is well known by physicists and it is called the rotating-wave approximation (RWA, for short).
It is a decoupling approximation to get 
rid of highly oscillating terms  when the system is driven by a real control. This approximation is based on a first-order averaging procedure (see \cite{Sanders, Sussmann1993, Liu97,Bullo} for more informations about averaging of dynamical systems). 
This approximation is known to work well for a small detuning from the resonance frequency and a small amplitude.
For a review of the RWA and its limitations see \cite{fujii2017} and \cite{Guerin98,Guerin99,Irish}. In \cite{Chambrion}, the mathematical framework has been set for infinite-dimensional quantum systems, formalizing what physicists call \emph{Generalized Rabi oscillations} and showing that the RWA is valid for a large class of quantum systems.
The adiabatic and RWA involve different time scales, and it is natural to ask whether or not they can be used in cascade.
The aim of this article is to show the validity of such an approximation under a certain condition on the time scales involved in the dynamics, using an averaging procedure.
Then the well-known results of adiabatic theory (see \cite{Bos, BoscainChittaro,Teu}) can be applied in order to get transitions between the eigenstates of the free Hamiltonian.
It leads us to design a control law achieving the inversion of a Spin-$\frac{1}{2}$ particule that is robust with respect to inhomogeneities of the amplitude of the control input (see \cite{Vitanov}). 
Then we can deduce an ensemble controllability result in the sense developed 
in \cite{Li,Beauchard}.
 As a byproduct of the use of a control oscillating with a small frequency detuning, the proposed method is not expected to be robust with respect to inhomogeneities of the resonance frequencies.

\section{NOTATIONS}

Denote by $U(n)$ the Lie group of unitary $n\times n$ matrices and by $u(n)$ its Lie algebra.
For $z\in \C$, denote by $\bar{z}$ its complex conjugate.
For a complex valued matrix $A$, denote by $A_{jk}$ its $(j,k)\in \{1,\dots,n\}^2$ coefficient and 
by $A^{*}$ its adjoint matrix.

\section{GENERAL FRAMEWORK AND MAIN RESULTS}
\subsection{Problem formulation} 
\subsubsection{Rotating frame}
Consider $v,\phi\in C^{\infty}([0,1],\R)$ such that $\phi(0)=0$, $E>0,$ and $\psi_0\in \C^2$.
Denote by $\psi:[0,1]\to \C^2$
 the solution of the equation
\begin{equation}\label{pre-RWA}
i\frac{d\psi(t)}{dt}=\begin{pmatrix}
E&w(t) \\
\bar{w}(t) &-E\\
\end{pmatrix} \psi(t), \quad \psi(0)=\psi_0,
\end{equation} where $w(t)=v( t)e^{i(2Et+\phi(t))}$.
Define $\eta(t)={\bf U}(t)\psi(t)$ where 
\[
{\bf U}(t)=\begin{pmatrix}
e^{-i\left(Et+\frac{\phi(t)}{2}\right)}&0\\
0&e^{i\left(Et+\frac{\phi(t)}{2}\right)}\\
\end{pmatrix}
\]
Then $\eta(t)$ satisfies
\begin{equation}\label{decoupled}
i\frac{d\eta(t)}{dt}=\begin{pmatrix}
-\frac{\phi'(t)}{2}&v(t) \\
v(t) &\frac{\phi'(t)}{2}\\
\end{pmatrix} \eta(t), \quad \eta(0)=\psi_0.
\end{equation}
We say that the dynamics are expressed in the \emph{rotating frame} of speed $E+ \frac{\phi'(t)}{2}$.
Such an equation can be controlled using several approaches, namely
via the well-known Rabi oscillations and the adiabatic approach presented below (see \cite{Vitanov} for a comparison between the two approaches).

\subsubsection{Adiabatic control in the rotating frame}
In order to  design an 
adiabatic control strategy for Equation (\ref{decoupled}),
let us add a parameter $\epsilon$ in the control $w$ and introduce 
$w_{\epsilon}(t)=v(\epsilon t)e^{i(2Et+\frac{\phi(\epsilon t)}{\epsilon})}$.
Consider the corresponding solution of (\ref{decoupled}) with initial condition $\psi_0$, that is,
\begin{equation*}
i\frac{d\eta_{\epsilon}(t)}{dt}= \begin{pmatrix}
-\frac{\phi'(\epsilon t)}{2}&v(\epsilon t) \\
v(\epsilon t) &\frac{\phi'(\epsilon t)}{2}\\
\end{pmatrix} \eta_{\epsilon}(t),\quad \eta_{\epsilon}(0)=\psi_0.
\end{equation*}
In the variable $\tau=\epsilon t\in [0,1]$, the 
reparameterized trajectory $\tilde\eta_{\epsilon}(\tau)=\eta_{\epsilon}(\tau/\epsilon)$ 
satisfies
\begin{equation}\label{AD}
i\frac{d\tilde\eta_{\epsilon}(\tau)}{d\tau}=\frac{1}{\epsilon} \begin{pmatrix}
-\frac{\phi'(\tau)}{2}&v(\tau) \\
v(\tau) &\frac{\phi'(\tau)}{2}\\
\end{pmatrix} \tilde\eta_{\epsilon}(\tau), \quad \tilde\eta_{\epsilon}(0)=\psi_0.
\end{equation}

Let $v$ and $\phi$ be chosen so that the curve  
$(v,\phi'):[0,1]\to \R^2$ connects $(0,-1)$ to $(0,1)$ 
intersecting the vertical axis 
only at its endpoints.  Then, by standard adiabatic approximation, 
if $\psi_0=(1,0)$, then $\tilde\eta_{\epsilon}(1)$ converges, up to phases, to $(0,1)$ as $\epsilon\to0$. 
In the literature, this control strategy, called \emph{chirped adiabatic pulse}, is now very classical. Its robustness properties have been mathematically studied 
in~\cite{Ensemble}.

\subsubsection{Rotating wave approximation}

In many applications only one real control is available. A classical strategy to  duplicate 
the control input is the so-called \emph{rotating wave approximation} (RWA) that works as follows. Let $\varphi_{\tilde\epsilon}:[0,1/\tilde\epsilon]\to\C^2$ be the solution of \eqref{pre-RWA} where $w$ is replaced by the control 
$u_{\tilde\epsilon}(t)=2\tilde\epsilon v(\tilde\epsilon t)\cos(2Et+\phi(\tilde\epsilon t))$.
Let 
\[{\bf U}_{\tilde\epsilon}(t)=\begin{pmatrix}
e^{-i\left(Et+\frac{\phi(\tilde\epsilon t)}{2}\right)}&0\\
0&e^{+i\left(Et+\frac{\phi(\tilde\epsilon t)}{2}\right)}\\
\end{pmatrix}.\]
The RWA then states that $\tau\mapsto {\bf U}_{\tilde\epsilon}(\tau/\tilde\epsilon)\varphi_{\tilde\epsilon}(\tau/\tilde\epsilon)$ 
converges uniformly, as $\tilde\epsilon\to 0$, to the solution of 
\begin{equation}\label{decoupled2}
i\frac{d\eta(t)}{dt}=\begin{pmatrix}
-\frac{\phi'(t)}{2}&v(t) \\
v(t) &\frac{\phi'(t)}{2}\\
\end{pmatrix} \eta(t), \quad \eta(0)=\psi_0.
\end{equation}
Notice that the limit equation \eqref{decoupled2} coincides with \eqref{decoupled}, which is the original equation \eqref{pre-RWA} with complex controls in the rotating frame. 
We have already described how to control \eqref{decoupled} via adiabatic theory. 
It is not clear, however, if the RWA and the adiabatic approximations can be combined. 

For this purpose, we introduce 
$u_{\epsilon_{1},\epsilon_{2}}(t)=2\epsilon_{1}v(\epsilon_{1}\epsilon_{2}t)\cos(2Et+\frac{1}{\epsilon_{2}}\phi(\epsilon_{1}\epsilon_{2}t))$, where $\epsilon_1$ and $\epsilon_2$ play the role of $\tilde\epsilon$ and $\epsilon$, respectively.
In order to establish in which regime the two approximations can be combined, 
 we set $\epsilon_{1}=\epsilon^{\alpha}$, $\epsilon_{2}=\epsilon$ where $\alpha\in \R$ and $u_{\epsilon}=u_{\epsilon^{\alpha},\epsilon}$.
Consider the Cauchy problem 
\begin{equation}\label{twolevel}
i\frac{d\psi_{\epsilon}(t)}{dt}=\begin{pmatrix}
E&u_{\epsilon}(t) \\
u_{\epsilon}(t) &-E\\
\end{pmatrix} \psi_{\epsilon}(t), \quad \psi_{\epsilon}(0)=\psi_0.
\end{equation} 
Define $\Psi_{\epsilon}(t)=U_{\epsilon}(t)\psi_{\epsilon}(t)$ where $U_{\epsilon}(t)=\begin{pmatrix}
e^{-i\left(Et+\frac{\phi(\epsilon^{\alpha+1} t)}{2\epsilon}\right)}&0\\
0&e^{+i\left(Et+\frac{\phi(\epsilon^{\alpha+1}t)}{2\epsilon}\right)}\\
\end{pmatrix}$.
In the variable $\tau=\epsilon^{\alpha+1}t\in [0,1]$, the 
reparameterized trajectory $\tilde\Psi_{\epsilon}(\tau)=\Psi_{\epsilon}(\tau/\epsilon)$ 
satisfies,
\begin{equation}\label{pert}
\frac{d\tilde\Psi_{\epsilon}(\tau)}{d\tau}=\left(\frac{1}{\epsilon}A(\tau)+B_{\epsilon}(\tau)\right) \tilde\Psi_{\epsilon}(\tau), \quad \tilde\Psi_{\epsilon}(0)=\psi_0,
\end{equation}
 where $A(\tau)=-i\begin{pmatrix}
-\phi'(\tau)/2&v(\tau)\\
v(\tau)&\phi'(\tau)/2\\
\end{pmatrix} $ and $B_{\epsilon}(\tau)=
 \frac{-i}{\epsilon} 
\begin{pmatrix}
0&v(\tau)e^{i(\frac{4E\tau}{\epsilon^{\alpha+1}}+\frac{2 \phi(\tau)}{\epsilon})}\\
v(\tau)e^{-i(\frac{4E\tau}{\epsilon^{\alpha+1}} +\frac{2 \phi(\tau)}{\epsilon})}&0\\
\end{pmatrix}$.
The dynamics of $\tilde\Psi$ are characterized by the sum of the term that we had in Equation (\ref{AD}), that corresponds to the dynamics for the complex control case in the rotating frame, and of an oscillating term $B_{\epsilon}(\tau)$. 
The RWA consists in 
neglecting the term $B_{\epsilon}$. We are going to show that this can be mathematically justified if $\alpha>1$. 
 Numerical simulations suggest that 
 the situation is different when the condition is not satisfied.

\subsection{Main results}

In order to obtain the asymptotic analysis announced in the previous section, we show a result of approximation of adiabatic trajectories for general $n$-level systems under the form of Equation (\ref{pert}).
Then we deduce results in the particular case of two-level systems with a drift term.

\subsubsection{Adiabatic approximation result}

\begin{defi}
For $A\in C^\infty([0,1],u(n))$, 
denote by
$j\mapsto \lambda_j(\tau)$ 
the nondecreasing sequence of eigenvalues of $iA(\tau)$.
We say that $A$ satisfies a gap condition if and only if there exists $C>0$ such that 
\begin{equation*}\label{gap}
\forall j\ne k, \forall \tau\in [0,1],\ |\lambda_j(\tau) -\lambda_k(\tau) |>C.
\tag{GAP}
\end{equation*}
\end{defi}

\begin{defi}\label{B}
Let $\alpha$ be a nonzero real number. Define by $S(\alpha)$  the set of families $(B_{\epsilon})_{\epsilon>0}$ of functions in $C^{\infty}([0,1],u(n))$ such that
\begin{itemize}
\item 
$(B_{\epsilon}(\tau))_{jj}=0$ for every $j=1,\dots,n$ and every $\tau\in [0,1]$,
\item 
for every $k>j$ there exist $\beta_{jk}\in\R\setminus\{0\}$ and 
$v_{jk},h_{jk}\in C^{\infty}([0,1],\R)$ such that $(B_{\epsilon}(\tau))_{jk}=-\frac{i}{\epsilon}v_{jk}(\tau)e^{i(\frac{\beta_{jk}\tau}{\epsilon^{\alpha+1}}+\frac{h_{jk}(\tau)}{\epsilon})}$ for every $\tau\in [0,1]$.
\end{itemize} 
\end{defi}

\begin{teo}\label{Drive}
Consider $A\in C^\infty([0,1],u(n))$ and $(B_{\epsilon})_{\epsilon>0}\in S(\alpha)$ with $\alpha>1$. Assume that $A(\cdot)$ satisfies (\ref{gap}). Set $X_0\in \C^n$ independent of $\epsilon$.
Let $X_{\epsilon}$ be the solution of $\frac{dX_{\epsilon}(\tau)}{d\tau}= \left(\frac{1}{\epsilon}A(\tau)+B_{\epsilon}(\tau)\right)X_{\epsilon}(\tau)$ such that $X_{\epsilon}(0)=X_{0}$ and $\hat{X}_{\epsilon}$ be the solution of $\frac{d\hat{X}_{\epsilon}(\tau)}{d\tau}= \frac{1}{\epsilon}A(\tau)\hat{X}_{\epsilon}(\tau)$ such that $\hat{X}_{\epsilon}(0)=X_{0}$.
Then there exists $c>0$ independent of $\tau,\epsilon$ such that for every $\tau \in [0,1]$, $\|X_{\epsilon}(\tau)-\hat{X}_{\epsilon}(\tau)\|\leq c \epsilon^{\text{min}(1,\alpha-1)}$.
\end{teo}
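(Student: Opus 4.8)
The plan is to pass to the interaction picture of the fast flow $\frac{1}{\epsilon}A$ and then extract a gain from the fast oscillation of $B_\epsilon$ by a single integration by parts.

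First, since $A(\tau)\in u(n)$ the propagator $W_\epsilon$ of $\dot Z=\frac{1}{\epsilon}A(\tau)Z$, $W_\epsilon(0)=I$, is unitary, and $\hat X_\epsilon(\tau)=W_\epsilon(\tau)X_0$; moreover $\frac{1}{\epsilon}A(\tau)+B_\epsilon(\tau)\in u(n)$, so $\|X_\epsilon(\tau)\|=\|X_0\|$ for every $\tau$. By the variation of constants formula applied to $\dot X_\epsilon=\frac{1}{\epsilon}AX_\epsilon+B_\epsilon X_\epsilon$ one gets $X_\epsilon(\tau)-\hat X_\epsilon(\tau)=W_\epsilon(\tau)\int_0^\tau W_\epsilon(s)^{*}B_\epsilon(s)X_\epsilon(s)\,ds$, whence, by unitarity, $\|X_\epsilon(\tau)-\hat X_\epsilon(\tau)\|=\bigl\|\int_0^\tau W_\epsilon(s)^{*}B_\epsilon(s)X_\epsilon(s)\,ds\bigr\|$. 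Thus everything reduces to bounding this last integral by $c\,\epsilon^{\min(1,\alpha-1)}$ uniformly in $\tau\in[0,1]$; note that only the boundedness of $A$ on $[0,1]$, and not the gap condition, is used below.

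Second, I would construct an explicit primitive of $B_\epsilon$ that is small because $B_\epsilon$ oscillates fast. For $k>j$ write $\theta_{jk,\epsilon}(s)=\frac{\beta_{jk}s}{\epsilon^{\alpha+1}}+\frac{h_{jk}(s)}{\epsilon}$, so that $\theta_{jk,\epsilon}'(s)=\frac{\beta_{jk}}{\epsilon^{\alpha+1}}+\frac{h_{jk}'(s)}{\epsilon}$; since $\alpha>0$ and there are finitely many pairs, there is $\epsilon_0>0$ with $|\theta_{jk,\epsilon}'(s)|\ge\frac{|\beta_{jk}|}{2\epsilon^{\alpha+1}}$ for all $s\in[0,1]$, $k>j$, and $0<\epsilon<\epsilon_0$. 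Define $P_\epsilon\in C^\infty([0,1],u(n))$ by $(P_\epsilon(s))_{jk}=\frac{-i}{\epsilon}\,\frac{v_{jk}(s)}{i\,\theta_{jk,\epsilon}'(s)}\,e^{i\theta_{jk,\epsilon}(s)}$ for $k>j$, $(P_\epsilon)_{kj}=-\overline{(P_\epsilon)_{jk}}$ and $(P_\epsilon)_{jj}=0$. Then $\|P_\epsilon(s)\|=O(\epsilon^{\alpha})$ uniformly in $s$, and differentiating entrywise one finds $B_\epsilon=P_\epsilon'-R_\epsilon$, where $R_\epsilon$ gathers the terms obtained by differentiating the slowly varying amplitudes $v_{jk}/\theta_{jk,\epsilon}'$; since $\frac{d}{ds}\bigl(v_{jk}/\theta_{jk,\epsilon}'\bigr)=O(\epsilon^{\alpha+1})$ in sup norm (the dominant part being $v_{jk}'/\theta_{jk,\epsilon}'$), one also gets $\|R_\epsilon(s)\|=O(\epsilon^{\alpha})$.

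Finally, I would integrate by parts in $\int_0^\tau W_\epsilon^{*}B_\epsilon X_\epsilon\,ds=\int_0^\tau W_\epsilon^{*}P_\epsilon' X_\epsilon\,ds-\int_0^\tau W_\epsilon^{*}R_\epsilon X_\epsilon\,ds$, using $\frac{d}{ds}W_\epsilon^{*}=-\frac{1}{\epsilon}W_\epsilon^{*}A$ and $\dot X_\epsilon=(\frac{1}{\epsilon}A+B_\epsilon)X_\epsilon$:
\[
\int_0^\tau W_\epsilon^{*}P_\epsilon' X_\epsilon\,ds=\bigl[W_\epsilon^{*}P_\epsilon X_\epsilon\bigr]_0^\tau+\frac{1}{\epsilon}\int_0^\tau W_\epsilon^{*}AP_\epsilon X_\epsilon\,ds-\int_0^\tau W_\epsilon^{*}P_\epsilon\Bigl(\tfrac{1}{\epsilon}A+B_\epsilon\Bigr)X_\epsilon\,ds .
\]
The boundary term is $O(\epsilon^{\alpha})$ because $\|P_\epsilon\|_\infty=O(\epsilon^{\alpha})$, $\|W_\epsilon^{*}\|=1$ and $\|X_\epsilon\|=\|X_0\|$; the two remaining integrals are $O(\epsilon^{\alpha-1})$ since $\|\frac{1}{\epsilon}AP_\epsilon\|_\infty$ and $\|P_\epsilon(\frac{1}{\epsilon}A+B_\epsilon)\|_\infty$ are $O(\epsilon^{\alpha-1})$ (here one uses $\|B_\epsilon\|_\infty=O(1/\epsilon)$); and $\int_0^\tau W_\epsilon^{*}R_\epsilon X_\epsilon\,ds=O(\epsilon^{\alpha})$. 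Adding up, $\|X_\epsilon(\tau)-\hat X_\epsilon(\tau)\|\le c\,\epsilon^{\alpha-1}$ for $\epsilon$ small, uniformly in $\tau$; since $\alpha>1$ and $\epsilon\le1$ this is at most $c\,\epsilon^{\min(1,\alpha-1)}$, as claimed. The only genuinely delicate point is the mechanism above: although $B_\epsilon$ has size $1/\epsilon$, it admits the primitive $P_\epsilon$ of size $\epsilon^{\alpha}$ precisely because its phase has derivative of order $\epsilon^{-(\alpha+1)}$, so the integration by parts yields a net factor $\epsilon^{\alpha}$, which is then partially spent — one power of $1/\epsilon$ — when the fast propagator $W_\epsilon$ is differentiated; the hypothesis $\alpha>1$ is exactly what makes the surviving power $\epsilon^{\alpha-1}$ tend to zero, i.e. what makes the RWA compatible with the adiabatic regime.
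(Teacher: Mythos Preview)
Your proof is correct and follows a genuinely different, more elementary route than the paper's. The paper first diagonalises $A(\tau)$ smoothly (this is where the gap condition enters), passes to the adiabatic frame $Y_\epsilon=e^{i\Gamma/\epsilon}P^{*}X_\epsilon$, and then controls separately (i) the conjugated perturbation $M(P,\Gamma,\epsilon)=e^{i\Gamma/\epsilon}P^{*}B_\epsilon P e^{-i\Gamma/\epsilon}$, whose flow is shown to be $\mathrm{Id}+O(\epsilon^{\alpha-1})$ via an averaging lemma, and (ii) the adiabatic coupling $D_\epsilon=e^{i\Gamma/\epsilon}\frac{dP^{*}}{d\tau}P\,e^{-i\Gamma/\epsilon}$, whose off-diagonal entries average to $O(\epsilon)$ precisely because of the eigenvalue gap. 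Both $X_\epsilon$ and $\hat X_\epsilon$ are then compared to the explicit adiabatic propagator $\Upsilon_\epsilon$, and a triangle inequality yields $O(\epsilon^{\min(1,\alpha-1)})$. You bypass all of this: working in the interaction picture of the \emph{full} fast flow $W_\epsilon$, you never touch the eigenstructure of $A$---only its boundedness---and a single integration by parts against the explicit primitive $P_\epsilon$ of $B_\epsilon$ already delivers $O(\epsilon^{\alpha-1})$. Your bound is therefore actually sharper than the stated one when $\alpha>2$, and your argument shows that the gap hypothesis is superfluous for this particular estimate. What the paper's detour buys is the intermediate object $\Upsilon_\epsilon$ itself---an explicit description of the limiting adiabatic dynamics---which is the quantity actually exploited in the downstream control strategy; your direct comparison of $X_\epsilon$ with $\hat X_\epsilon$ does not produce it.
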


\subsubsection{Application to two-level systems}
We consider $v,\phi\in C^{\infty}([0,1],\R)$ such that $\phi(0)=0$ and $E>0$.
We consider now Equation~(\ref{twolevel}) where $u_{\epsilon}(t)=2\epsilon^{\alpha}v(\epsilon^{\alpha+1}t)\cos(2Et+\frac{1}{\epsilon}\phi(\epsilon^{\alpha+1}t))$.
In the fast time scale $\tau=\epsilon^{\alpha+1}t\in [0,1]$, Equation~(\ref{twolevel}) can be rewritten as 
\begin{equation}\label{spin}
i\frac{d\psi_{\epsilon}(\tau)}{d\tau}=\begin{pmatrix}
\frac{E}{\epsilon^{\alpha+1}}&u_{\epsilon}(\tau) \\
u_{\epsilon}(\tau) &-\frac{E}{\epsilon^{\alpha+1}}\\
\end{pmatrix} \psi_{\epsilon}(\tau)
\end{equation} for $\tau\in [0,1]$ where by a slight abuse of notation, we write $u_{\epsilon}(\tau)=\frac{2}{\epsilon}v(\tau)\cos(\frac{2E\tau}{\epsilon^{\alpha+1}}+\frac{1}{\epsilon}\phi(\tau))$.
Set $\psi_0\in \C^2$ independent of $\epsilon$.
Let $\psi_{\epsilon}(\cdot)$ be the solution of Equation~(\ref{spin}) such that  $\psi_{\epsilon}(0)=\psi_0$.
Similarly, let $\hat{\psi}_{\epsilon}$ be the solution of
\begin{equation}\label{spincomplexe}
i\frac{d\hat{\psi}_{\epsilon}(\tau)}{d\tau}=\begin{pmatrix}
\frac{E}{\epsilon^{\alpha+1}}&w_{\epsilon}(\tau) \\
\bar{w}_{\epsilon}(\tau) &-\frac{E}{\epsilon^{\alpha+1}}\\
\end{pmatrix} \hat{\psi}_{\epsilon}(\tau), \quad \hat{\psi}_{\epsilon}(0)=\psi_0
\end{equation} for $\tau\in [0,1]$ and $w_{\epsilon}(\tau)=\frac{1}{\epsilon}v(\tau)e^{i(\frac{2E\tau}{\epsilon^{\alpha+1}}+\frac{1}{\epsilon}\phi(\tau))}$.

\begin{teo}\label{RWA}
Assume that $\alpha>1$. 
Consider $v,\phi$ in $C^{\infty}([0,1],\R)$ such that $\phi(0)=0$ and $v^2+\frac{\phi'^2}{4}$ is bounded from below by $C>0$.
Then the solution  $\psi_{\epsilon}$ of Equation (\ref{spin}) satisfies
$\|\psi_{\epsilon}(\tau) -\hat{\psi}_{\epsilon}(\tau)\|<c \epsilon^{\text{min}(1,\alpha-1)}$ where
 $c>0$ is independent of $(\tau,\epsilon)$.
\end{teo}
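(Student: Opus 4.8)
The plan is to deduce Theorem~\ref{RWA} from Theorem~\ref{Drive} by passing to the rotating frame, exactly as sketched around Equation~(\ref{pert}). Introduce the unitary gauge transformation
\[
U_\epsilon(\tau)=\begin{pmatrix}
e^{-i\left(\frac{E\tau}{\epsilon^{\alpha+1}}+\frac{\phi(\tau)}{2\epsilon}\right)} & 0\\
0 & e^{+i\left(\frac{E\tau}{\epsilon^{\alpha+1}}+\frac{\phi(\tau)}{2\epsilon}\right)}
\end{pmatrix},
\]
and set $X_\epsilon(\tau)=U_\epsilon(\tau)\psi_\epsilon(\tau)$ and $\hat X_\epsilon(\tau)=U_\epsilon(\tau)\hat\psi_\epsilon(\tau)$. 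Since $\phi(0)=0$ we have $U_\epsilon(0)=\mathrm{Id}$, so $X_\epsilon(0)=\hat X_\epsilon(0)=\psi_0$. A direct computation --- differentiating $U_\epsilon\psi_\epsilon$, and using $2\cos\theta=e^{i\theta}+e^{-i\theta}$ to split the real control $u_\epsilon$ into a co-rotating piece, whose phase is cancelled by $U_\epsilon$, and a counter-rotating piece, whose phase doubles --- shows that $X_\epsilon$ solves precisely Equation~(\ref{pert}), namely $\frac{dX_\epsilon}{d\tau}=\bigl(\frac1\epsilon A(\tau)+B_\epsilon(\tau)\bigr)X_\epsilon$ with the $A$ and $B_\epsilon$ displayed there; for the complex control the counter-rotating piece is simply not present, so the same computation gives $\frac{d\hat X_\epsilon}{d\tau}=\frac1\epsilon A(\tau)\hat X_\epsilon$. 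As $U_\epsilon(\tau)\in U(2)$, one has $\|\psi_\epsilon(\tau)-\hat\psi_\epsilon(\tau)\|=\|X_\epsilon(\tau)-\hat X_\epsilon(\tau)\|$ for every $\tau\in[0,1]$, so it suffices to estimate the latter.

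I would then verify that Theorem~\ref{Drive} applies with $n=2$ and $X_0=\psi_0$. First, $(B_\epsilon)_{\epsilon>0}\in S(\alpha)$: its diagonal vanishes and its $(1,2)$ entry has the required form $-\frac i\epsilon v_{12}(\tau)e^{i(\beta_{12}\tau/\epsilon^{\alpha+1}+h_{12}(\tau)/\epsilon)}$ with $v_{12}=v$, $h_{12}=2\phi$ and $\beta_{12}=4E\neq0$, the nonvanishing of $\beta_{12}$ being exactly where $E>0$ is used. Second, $A$ satisfies the gap condition (\ref{gap}): $iA(\tau)$ is the real symmetric matrix $\begin{pmatrix}-\phi'(\tau)/2 & v(\tau)\\ v(\tau) & \phi'(\tau)/2\end{pmatrix}$, with eigenvalues $\pm\sqrt{v(\tau)^2+\phi'(\tau)^2/4}$, so the spectral gap equals $2\sqrt{v(\tau)^2+\phi'(\tau)^2/4}\ge 2\sqrt C>0$ by the standing hypothesis that $v^2+\phi'^2/4$ is bounded below by $C$ --- this is the sole role of that hypothesis.

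Finally, invoking Theorem~\ref{Drive} (legitimately, since $\alpha>1$) produces $c>0$ independent of $(\tau,\epsilon)$ with $\|X_\epsilon(\tau)-\hat X_\epsilon(\tau)\|\le c\,\epsilon^{\min(1,\alpha-1)}$ on $[0,1]$, which combined with the gauge identity of the first paragraph gives the claimed bound. I do not expect a genuine obstacle here: all the quantitative content is already packaged in Theorem~\ref{Drive}, and what remains is the bookkeeping of the first step --- checking that conjugation by $U_\epsilon$ reproduces exactly the operators $A,B_\epsilon$ of~(\ref{pert}) in the real-control case while removing the oscillation in the complex-control case --- plus the elementary $2\times2$ spectral computation behind the gap condition. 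One may note in passing that all the Cauchy problems involved are globally well posed on $[0,1]$, the generators being smooth in $\tau$ and skew-Hermitian once the constant drift is gauged away, so the flows are unitary.
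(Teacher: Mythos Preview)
Your proposal is correct and follows essentially the same route as the paper's own proof: pass to the rotating frame via the diagonal unitary $U_\epsilon$, recognize the resulting equations for $X_\epsilon$ and $\hat X_\epsilon$ as the perturbed and unperturbed adiabatic systems of Theorem~\ref{Drive}, verify the hypotheses (\ref{gap}) and $(B_\epsilon)\in S(\alpha)$, and conclude by unitarity of $U_\epsilon$. Your write-up is in fact a bit more explicit than the paper's --- you spell out the identification $v_{12}=v$, $h_{12}=2\phi$, $\beta_{12}=4E\neq 0$ and the eigenvalue computation $\pm\sqrt{v^2+\phi'^2/4}$ behind the gap --- but the logical skeleton is identical.
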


Theorem~\ref{RWA} will be used  in Section \ref{Spinup} to design control laws for two-level systems using the key fact that $\hat{\psi}_{\epsilon}(\tau)$ follows an adiabatic evolution up to a change of frame.

\section{APPROXIMATION RESULTS}

\subsection{Variation formula}

We recall here without proof 
a classical formula which will be useful to neglect highly oscillating parts of the dynamics.
\begin{prop}[Variation formula \cite{Agra}]\label{variation}
Consider 
\begin{equation}\label{vari}
\frac{dx(\tau)}{d\tau}=\left(A(\tau)+B(\tau)\right)x(\tau), \quad x(\tau)\in \C^n,
\end{equation}
where $A,B$ be in $C^\infty([0,1],u(n))$. 
Denote the flow at time $\tau$  of 
$\frac{dx(\tau)}{d\tau}=A(\tau)x(\tau)$  by $P_{\tau}\in U(n)$ and the flow at time $\tau$ of 
$\frac{dx(\tau)}{d\tau}=P_{\tau}^{-1}B(\tau) P_{\tau}x(\tau)$  by $W_{\tau}\in U(n)$.
Then the flow of 
(\ref{vari}) at time $\tau$ is equal to $
P_{\tau}W_{\tau}$.
\end{prop}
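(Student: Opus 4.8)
The plan is to verify the identity by performing the standard change of frame $x(\tau)=P_\tau y(\tau)$ (the passage to the ``interaction picture'') and checking that the new unknown $y$ solves exactly the equation whose flow is $W_\tau$. Since $A(\tau)\in u(n)$, the flow $P_\tau$ is unitary, hence invertible with $P_\tau^{-1}=P_\tau^{*}$; in particular the substitution is well defined for all $\tau\in[0,1]$, and the flows are normalized by $P_0=I$.

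First I would differentiate the ansatz. Using that $P_\tau$ is the flow of $A$, so that $\frac{d}{d\tau}P_\tau=A(\tau)P_\tau$, the product rule gives
\begin{equation*}
\frac{dx(\tau)}{d\tau}=A(\tau)P_\tau y(\tau)+P_\tau\frac{dy(\tau)}{d\tau}.
\end{equation*}
On the other hand, if $x$ solves (\ref{vari}) then $\frac{dx}{d\tau}=(A(\tau)+B(\tau))P_\tau y(\tau)$. Equating the two expressions and cancelling the common term $A(\tau)P_\tau y(\tau)$ yields $P_\tau\frac{dy}{d\tau}=B(\tau)P_\tau y(\tau)$, that is
\begin{equation*}
\frac{dy(\tau)}{d\tau}=P_\tau^{-1}B(\tau)P_\tau\, y(\tau).
\end{equation*}
This is precisely the equation whose flow is denoted $W_\tau$. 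Note that $P_\tau^{-1}B(\tau)P_\tau\in u(n)$, because conjugation by a unitary preserves skew-Hermiticity, so $W_\tau$ is indeed unitary and its flow is well defined.

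It then remains to match initial data and invoke uniqueness. Since $P_0=I$, the substitution gives $y(0)=x(0)$, so by uniqueness of solutions of the linear Cauchy problem defining $W_\tau$ we get $y(\tau)=W_\tau y(0)=W_\tau x(0)$. Returning to the original variable, $x(\tau)=P_\tau y(\tau)=P_\tau W_\tau x(0)$ for every $\tau\in[0,1]$ and every initial condition $x(0)$, which is exactly the assertion that the flow of (\ref{vari}) equals $P_\tau W_\tau$. I do not expect a genuine obstacle here: the argument is a direct computation in the interaction picture, and the only points deserving a word of care are the invertibility of $P_\tau$ (guaranteed by $A\in u(n)$) and the normalization $P_0=W_0=I$ of the flows at $\tau=0$.
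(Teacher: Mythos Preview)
Your argument is correct and is the standard interaction-picture computation. Note, however, that the paper does not actually prove this proposition: it is stated there without proof as a classical variation formula (with a reference), so there is no ``paper's own proof'' to compare against. Your derivation is exactly the expected one.
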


\subsection{Regularity of the eigenstates}
We recall here a 
well-known regularity result. 
\begin{lem}\label{reg}
Let $A\in C^\infty([0,1],u(n))$
satisfy (\ref{gap}).
Then the eigenvectors and the eigenvalues of $iA(\tau)$ can be chosen $C^{\infty}$ with respect to $\tau$.
\end{lem}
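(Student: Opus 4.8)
The plan is to use $C^\infty$ perturbation theory in the style of Kato: from the gap condition produce smooth spectral projectors via a contour integral, then recover the eigenvalues as traces against these projectors and the eigenvectors through a parallel‑transport ODE. First I would record the elementary facts: since $A(\tau)\in u(n)$, the matrix $iA(\tau)$ is Hermitian, so all its eigenvalues are real, and by (GAP) they are pairwise distinct — hence simple — for every $\tau\in[0,1]$; in particular the ordered eigenvalues $\lambda_1(\tau)\le\cdots\le\lambda_n(\tau)$ are strictly separated, and they depend continuously on $\tau$ (continuity of the roots of the characteristic polynomial, whose coefficients are $C^\infty$ in $\tau$, together with the ordering). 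Fix $j\in\{1,\dots,n\}$. Given $\tau_0\in[0,1]$, continuity of $\lambda_j$ and (GAP) provide a neighbourhood $I$ of $\tau_0$ in $[0,1]$ and a fixed circle $\Gamma\subset\C$ (say of radius $C/3$ centred at $\lambda_j(\tau_0)$) such that for all $\tau\in I$ the only eigenvalue of $iA(\tau)$ enclosed by $\Gamma$ is $\lambda_j(\tau)$, all others lying outside. On $I$ set
\[
P_j(\tau)=\frac{1}{2\pi i}\oint_{\Gamma}\bigl(zI-iA(\tau)\bigr)^{-1}\,dz,
\]
the orthogonal projector onto the one–dimensional eigenspace $\ker\bigl(iA(\tau)-\lambda_j(\tau)I\bigr)$.

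Next I would establish regularity. Since $(z,\tau)\mapsto(zI-iA(\tau))^{-1}$ is $C^\infty$ on a neighbourhood of $\Gamma\times I$ and $\Gamma$ is fixed, differentiation under the integral sign shows $\tau\mapsto P_j(\tau)$ is $C^\infty$ on $I$; as the integral does not depend on the particular separating contour, the locally defined projectors agree on overlaps, so $P_j$ is a globally $C^\infty$ family of rank‑one Hermitian projectors on $[0,1]$. The eigenvalue is then smooth because $iA(\tau)P_j(\tau)=\lambda_j(\tau)P_j(\tau)$ gives $\lambda_j(\tau)=\mathrm{tr}\bigl(iA(\tau)P_j(\tau)\bigr)$ (using $\mathrm{tr}\,P_j(\tau)=1$), a trace of a product of $C^\infty$ matrix‑valued functions; running this over $j=1,\dots,n$ settles the eigenvalues.

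It remains to choose the eigenvectors smoothly, which is the only delicate point: the naive normalization $\tau\mapsto P_j(\tau)\xi/\|P_j(\tau)\xi\|$ for a fixed $\xi$ need not be globally defined since $P_j(\tau)\xi$ may vanish. Instead I would use Kato's parallel transport: let $u_j$ solve the linear Cauchy problem
\[
\frac{du_j}{d\tau}(\tau)=\Bigl[\tfrac{dP_j}{d\tau}(\tau),\,P_j(\tau)\Bigr]u_j(\tau),\qquad u_j(0)=\xi_j,
\]
with $\xi_j$ any unit vector in the range of $P_j(0)$. The coefficient matrix is $C^\infty$ on the compact interval $[0,1]$, so $u_j$ exists and is $C^\infty$ on all of $[0,1]$. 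A short computation starting from $P_j^2=P_j$ (which yields $\tfrac{dP_j}{d\tau}=[\,[\tfrac{dP_j}{d\tau},P_j],P_j\,]$ and $P_j\tfrac{dP_j}{d\tau}P_j=0$) shows that $\tau\mapsto P_j(\tau)u_j(\tau)$ satisfies the same linear ODE with the same initial datum, hence $u_j(\tau)=P_j(\tau)u_j(\tau)$, i.e. $u_j(\tau)$ spans the $\lambda_j(\tau)$–eigenspace; moreover $[\tfrac{dP_j}{d\tau},P_j]$ is skew‑Hermitian (commutator of two Hermitian matrices, times $-1$ under adjoint), so its flow is unitary and $\|u_j(\tau)\|\equiv\|\xi_j\|=1$. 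Thus $u_j\in C^\infty([0,1],\C^n)$ is a smooth choice of unit eigenvector of $iA(\tau)$ for $\lambda_j(\tau)$, and together with the smoothness of the $\lambda_j$ this proves the lemma. Beyond these standard ingredients, the only real work is the bookkeeping that lets the contour $\Gamma$ be chosen locally uniform in $\tau$ — exactly where (GAP) is used — and the verification of the two invariances (range membership and unit norm) of the parallel‑transport flow.
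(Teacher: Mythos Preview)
Your proof is correct and complete: the Riesz--Kato contour integral gives smooth rank-one projectors $P_j$ under (GAP), the trace formula recovers smooth eigenvalues, and the parallel-transport ODE $u_j'=[P_j',P_j]u_j$ yields smooth unit eigenvectors (the invariance $P_ju_j=u_j$ follows, as you indicate, from $P_jP_j'P_j=0$ and $P_j'=P_j'P_j+P_jP_j'$, and norm preservation from skew-Hermiticity of $[P_j',P_j]$).

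There is nothing to compare against: the paper gives no proof of this lemma, introducing it as ``a well-known regularity result'' and stating it without argument. Your write-up supplies exactly the standard Kato-style justification one would cite for such a statement, and in fact goes beyond what the paper requires by handling the global eigenvector selection carefully via parallel transport rather than a local normalization.
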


\subsection{Averaging of quantum systems}

\begin{teo}\label{av}
Consider $A$ and $(A_{\epsilon})_{\epsilon>0}$ in $C^\infty([0,1],u(n))$ and assume that 
$A_{\epsilon}(\tau)$ is uniformly bounded w.r.t. $(\tau,\epsilon)$.
Denote the flow of the equation $\frac{dx(\tau)}{d\tau}=A(\tau)x(\tau)$ at time $\tau$  by $P_{\tau}\in U(n)$ and the flow of the equation $\frac{dx(\tau)}{d\tau}=A_{\epsilon}(\tau)x(\tau)$ at time $\tau$  by $P_{\tau}^{\epsilon}\in U(n)$.
If $\int_0^\tau A_{\epsilon}(s)ds=\int_0^\tau A(s) ds +O(\epsilon)$,
then $P_{\tau}^{\epsilon}=P_{\tau}+O(\epsilon)$,
both estimates being uniform w.r.t. $\tau\in  [0,1]$.
\end{teo}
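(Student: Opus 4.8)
The plan is to represent the difference of the two flows by a Duhamel (variation-of-constants) formula and then to transfer the smallness hypothesis --- which concerns the \emph{integral} of $A_\epsilon-A$, not $A_\epsilon-A$ itself --- onto the flows by an integration by parts. This is the standard mechanism behind first-order averaging, adapted to the matrix setting.

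First I would introduce the primitive $R_\epsilon(\tau):=\int_0^\tau A_\epsilon(s)\,ds-\int_0^\tau A(s)\,ds$, so that $R_\epsilon(0)=0$, $R_\epsilon'(\tau)=A_\epsilon(\tau)-A(\tau)$, and, by the hypothesis, $\|R_\epsilon(\tau)\|\le c_0\,\epsilon$ uniformly in $\tau\in[0,1]$. The matrix-valued map $Y_\epsilon(\tau):=P_\tau^\epsilon-P_\tau$ vanishes at $\tau=0$ and solves $\dot Y_\epsilon=A_\epsilon(\tau)Y_\epsilon+(A_\epsilon(\tau)-A(\tau))P_\tau$; since $P_\tau^\epsilon$ is the fundamental matrix of $\dot Z=A_\epsilon(\tau)Z$, variation of constants yields
\[
Y_\epsilon(\tau)=P_\tau^\epsilon\int_0^\tau (P_s^\epsilon)^{-1}\,R_\epsilon'(s)\,P_s\,ds .
\]

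Then I would integrate by parts in $s$. Using $\frac{d}{ds}(P_s^\epsilon)^{-1}=-(P_s^\epsilon)^{-1}A_\epsilon(s)$ and $\frac{d}{ds}P_s=A(s)P_s$, and the fact that the boundary contribution at $s=0$ vanishes because $R_\epsilon(0)=0$, one obtains
\[
\int_0^\tau (P_s^\epsilon)^{-1}R_\epsilon'(s)P_s\,ds=(P_\tau^\epsilon)^{-1}R_\epsilon(\tau)P_\tau+\int_0^\tau (P_s^\epsilon)^{-1}\big(A_\epsilon(s)R_\epsilon(s)-R_\epsilon(s)A(s)\big)P_s\,ds .
\]
Every factor on the right-hand side is bounded uniformly in $(\tau,\epsilon)$: $P_s^\epsilon,P_s\in U(n)$ have operator norm $1$, $A$ is bounded on the compact interval $[0,1]$, $A_\epsilon$ is bounded uniformly in $(\tau,\epsilon)$ by assumption, and $\|R_\epsilon(s)\|\le c_0\epsilon$; hence the displayed quantity is $O(\epsilon)$ uniformly in $\tau$. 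Multiplying by the unitary $P_\tau^\epsilon$ preserves the bound, which gives $\|P_\tau^\epsilon-P_\tau\|\le c\,\epsilon$ with $c$ independent of $\tau$ and $\epsilon$.

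The only genuine obstacle is the point just exploited: a direct Grönwall estimate on $Y_\epsilon$ would require $A_\epsilon-A$ to be pointwise small, which is false here, so one must convert $\int_0^\tau(A_\epsilon-A)=O(\epsilon)$ into a flow estimate through the integration by parts above --- and the uniform bound on $A_\epsilon$ is precisely what is needed to control $\frac{d}{ds}(P_s^\epsilon)^{-1}$ and close the argument. All remaining manipulations are elementary estimates on products of uniformly bounded matrix-valued functions on $[0,1]$.
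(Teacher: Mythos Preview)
Your proof is correct. The Duhamel representation of $Y_\epsilon$, the integration by parts, and the final estimate are all valid; the uniform bound on $A_\epsilon$ is used exactly where it is needed, to control $\frac{d}{ds}(P_s^\epsilon)^{-1}$.

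The paper does not prove this theorem directly: it states it as the special case $B_\epsilon=A_\epsilon-A$, $k=1$, of the next result (Theorem~\ref{avnb}). That proof is organized differently from yours. Instead of subtracting the two flows and applying Duhamel, the paper first studies the flow $Q_\tau^\epsilon$ of $B_\epsilon$ alone, shows $Q_\tau^\epsilon=\mathrm{Id}+O(\epsilon)$ by an integration by parts on its integral equation, then uses the variation formula $P_\tau^\epsilon=Q_\tau^\epsilon W_\tau^\epsilon$ (Proposition~\ref{variation}) and a Gronwall argument on $W_\tau^\epsilon$ to reach $P_\tau$. Both arguments rest on the same mechanism---turning the hypothesis $\int_0^\tau B_\epsilon=O(\epsilon)$ into a flow estimate via one integration by parts---but your route is more direct for this particular statement (no factorization of flows, no Gronwall step), while the paper's detour through Theorem~\ref{avnb} is designed to also cover the unbounded perturbations $B_\epsilon$ that appear later in the paper.
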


We state Theorem~\ref{av} without proof because it is a particular case of next result, Theorem~\ref{avnb}.
In the following, we do not assume the boundedness of $A_{\epsilon}$ with respect to $\epsilon$.
We refer to \cite{Kurzweil87,Kurzweil88bis,Kurzweil88,Liu97,Sussmann1993} for more informations on the case of averaging of a general class of dynamical systems with non-bounded and highly oscillatory inputs.
Our result provides an estimate of the error in the special case of quantum systems.
\begin{teo}\label{avnb}
Consider $A$ and $(B_{\epsilon})_{\epsilon>0}$ in $C^\infty([0,1],u(n))$.
Assume that
$\int_0^\tau B_{\epsilon}(s)ds=O(\epsilon)$
and 
$\int_0^\tau \left|B_{\epsilon}(s)\right|\left|\int_0^s B_{\epsilon}(x)dx\right| 
ds=O(\epsilon^k)$
uniformly w.r.t. $\tau\in [0,1]$, with $k>0$. 
Set $A_{\epsilon}=A+B_{\epsilon}$.
Denote the flow of the equation $\frac{dx(\tau)}{d\tau}=A(\tau)x(\tau)$ at time $\tau$  by $P_{\tau}\in U(n)$ and the flow of the equation $\frac{dx(\tau)}{d\tau}=A_{\epsilon}(\tau)x(\tau)$ at time $\tau$  by $P_{\tau}^{\epsilon}\in U(n)$.
Then we have 
$P_{\tau}^{\epsilon}=P_{\tau}+O(\epsilon^{\text{min}(k,1)})$,
uniformly w.r.t. $\tau\in  [0,1]$.

\begin{proof}
Under the hypotheses of the theorem, there exists $K>0$ such that for every $\tau\in [0,1]$,
 $|\int_0^\tau B_\epsilon(s) ds|<K \epsilon$. 
Let $Q_{\tau}^{\epsilon}$ be the flow associated with $B_{\epsilon}$.
We have $Q_{\tau}^{\epsilon}=\text{Id}+\int_0^\tau B_{\epsilon}(s)Q_{s}^{\epsilon} ds$, where $\text{Id}$ is the identity $n\times n$ matrix.
By integration by parts, $Q_{\tau}^{\epsilon}=\text{Id}+\left(\int_0^\tau B_{\epsilon}(s) ds\right) Q_{\tau}^{\epsilon} - \int_0^\tau (\int_0^s B_{\epsilon}(\theta) d\theta)B_{\epsilon}(s)Q_{s}^{\epsilon} ds$.
Moreover,
$Q_{\tau}^{\epsilon}$ is bounded uniformly w.r.t. $(\tau,\epsilon)$, since it evolves in $U(n)$.
By the triangular inequality, we get 
\begin{align*} \left|Q_{\tau}^{\epsilon}-\text{Id}\right|\leq &  \left|\int_0^\tau B_{\epsilon}(s) ds\right| \left|Q_{\tau}^{\epsilon}\right| + \int_0^\tau  \left|\int_0^s B_{\epsilon}(\theta) d\theta\right|\left| B_{\epsilon}(s)Q_{s}^{\epsilon}\right| ds\\
\leq& C_1 \epsilon + C_2 \epsilon^k,
\end{align*}
where $C_1,C_2$ are positive constants which do not depend on $(\epsilon,\tau)$. Hence, we deduce that
$Q_{\tau}^{\epsilon}=\text{Id}+O(\epsilon^q)$ uniformly w.r.t. $\tau \in [0,1]$, where $q=\text{min}(k,1)$.
The variation formula (Proposition~\ref{variation}) provides $P_{\tau}^{\epsilon}=Q_{\tau}^{\epsilon}W_{\tau}^{\epsilon}$, where $W_{\tau}^{\epsilon}\in U(n)$ is the flow of the equation $\frac{dx(\tau)}{d\tau}=(Q_{\tau}^{\epsilon})^{-1} A(\tau) Q_{\tau}^{\epsilon}x(\tau)$ at time $\tau$. By the previous estimate, we have $(Q_{\tau}^{\epsilon})^{-1} A(\tau) Q_{\tau}^{\epsilon}=A(\tau)+O(\epsilon^{q})$ uniformly w.r.t. $\tau \in [0,1]$.
By 
Gronwall's Lemma, we get that $W_{\tau}^{\epsilon}=P_{\tau}+O(\epsilon^{q})$  and we can conclude.
\end{proof}
\end{teo}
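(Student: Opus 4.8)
The plan is to exploit the variation formula (Proposition~\ref{variation}) so as to conjugate away the perturbation $B_\epsilon$ before invoking any Gronwall comparison. The essential difficulty is that $B_\epsilon$ itself need not be small as $\epsilon\to 0$---in the intended applications it carries factors of order $1/\epsilon$---so a direct Gronwall estimate comparing the two flows generated by $A$ and $A_\epsilon=A+B_\epsilon$ is hopeless. What \emph{is} small is the \emph{primitive} $\int_0^\tau B_\epsilon$, by the first hypothesis, and the whole argument is organized around transferring this smallness from the integral of $B_\epsilon$ to the flow it generates.

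First I would isolate the flow $Q_\tau^\epsilon\in U(n)$ generated by $B_\epsilon$ alone and write it through its Volterra equation $Q_\tau^\epsilon=\mathrm{Id}+\int_0^\tau B_\epsilon(s)Q_s^\epsilon\,ds$. Since $|B_\epsilon|$ is large, this cannot be estimated directly; instead I would integrate by parts, using that $s\mapsto\int_0^s B_\epsilon$ is a primitive of $B_\epsilon$. This produces a boundary term $\left(\int_0^\tau B_\epsilon\right)Q_\tau^\epsilon$, controlled by the first hypothesis $\int_0^\tau B_\epsilon=O(\epsilon)$ together with the boundedness of the unitary $Q_\tau^\epsilon$, and a remaining integral whose modulus is bounded by $\int_0^\tau\left|\int_0^s B_\epsilon\right|\,|B_\epsilon(s)|\,ds=O(\epsilon^k)$ by the second hypothesis. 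Combining the two gives $Q_\tau^\epsilon=\mathrm{Id}+O(\epsilon^{\min(k,1)})$ uniformly in $\tau$.

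With this in hand, the variation formula yields the exact factorization $P_\tau^\epsilon=Q_\tau^\epsilon W_\tau^\epsilon$, where $W_\tau^\epsilon$ is the flow of the conjugated equation $\dot x=(Q_\tau^\epsilon)^{-1}A(\tau)Q_\tau^\epsilon\,x$. The previous step shows $(Q_\tau^\epsilon)^{-1}A Q_\tau^\epsilon=A+O(\epsilon^{\min(k,1)})$ uniformly in $\tau$, so $W_\tau^\epsilon$ is now the flow of a genuinely $O(\epsilon^{\min(k,1)})$-perturbation of the \emph{bounded} generator $A$. A standard Gronwall estimate then gives $W_\tau^\epsilon=P_\tau+O(\epsilon^{\min(k,1)})$, and substituting both $Q_\tau^\epsilon=\mathrm{Id}+O(\epsilon^{\min(k,1)})$ and this estimate into $P_\tau^\epsilon=Q_\tau^\epsilon W_\tau^\epsilon$---using that all flows are unitary, hence bounded---closes the estimate.

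The hard part will be the second step, namely extracting the rate $O(\epsilon^{\min(k,1)})$ for $Q_\tau^\epsilon$ with no pointwise control on $B_\epsilon$. A naive Dyson expansion of $Q_\tau^\epsilon$ has a harmless first-order term $\int_0^\tau B_\epsilon=O(\epsilon)$, but its higher-order terms involve products of $B_\epsilon$ that are a priori large; the integration-by-parts device is exactly what repackages the second-order contribution as $\int_0^\tau\big|\int_0^s B_\epsilon\big|\,|B_\epsilon|\,ds$, which is precisely the quantity the second hypothesis is designed to bound. I would take care to arrange the argument so that it closes in a single step---the remainder already contains the bounded factor $Q_s^\epsilon$---so that no iteration of the expansion is needed and the two stated hypotheses suffice.
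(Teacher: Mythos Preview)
Your proposal is correct and follows essentially the same route as the paper's proof: isolate the flow $Q_\tau^\epsilon$ of $B_\epsilon$, use integration by parts in its Volterra equation together with unitarity and the two hypotheses to obtain $Q_\tau^\epsilon=\mathrm{Id}+O(\epsilon^{\min(k,1)})$, then apply the variation formula $P_\tau^\epsilon=Q_\tau^\epsilon W_\tau^\epsilon$ and a Gronwall comparison on the conjugated generator $(Q_\tau^\epsilon)^{-1}A\,Q_\tau^\epsilon=A+O(\epsilon^{\min(k,1)})$. Your emphasis on why a direct Gronwall estimate fails and why the integration-by-parts step is the crux is accurate and matches the paper's argument step for step.
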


\subsection{Perturbation of an adiabatic trajectory}\label{DRIVING}
Consider $A,B_{\epsilon}\in C^\infty([0,1],u(n))$.
Fix $\psi_0\in \C^n$. Let $X_{\epsilon}$ be the solution of $\frac{dX_{\epsilon}(\tau)}{d\tau}= \left(\frac{1}{\epsilon}A(\tau)+B_{\epsilon}(\tau)\right)X_{\epsilon}(\tau)$ such that $X_{\epsilon}(0)=X_{0}$ and let $\hat{X}_{\epsilon}$ be the solution of $\frac{d\hat{X}_{\epsilon}(\tau)}{d\tau}= \frac{1}{\epsilon}A(\tau)\hat{X}_{\epsilon}(\tau)$ such that $\hat{X}_{\epsilon}(0)=X_{0}$, that we call the \emph{adiabatic trajectory associated with $A$}.
The goal of this section is to understand under which conditions on $B_{\epsilon}(\cdot)$ we have 
\begin{equation}
\tag{T}
 \|X_{\epsilon}(\tau)-\hat{X}_{\epsilon}(\tau)\|\to 0
\label{insensitive}
\end{equation} 
uniformly with respect to $\tau\in [0,1]$.
By the variation formula (Proposition~\ref{variation}), one can show that if the flow of $\frac{dx(\tau)}{d\tau}=B_{\epsilon}(\tau)x(\tau)$, $x(\tau)\in \C^n$, is equal to $\text{Id}+O(\epsilon^k)$ uniformly w.r.t. $\tau\in [0,1]$ with $k>1$, then Property (\ref{insensitive}) is satisfied. However this condition is too conservative for our needs. 
We restrict our study to the class of perturbations  $(B_{\epsilon})_{\epsilon>0}\in S(\alpha)$ introduced in the Definition \ref{B}.
 We give below a  a sufficient condition on $\alpha$ such that Property (\ref{insensitive}) is satisfied for every $A$ satisfying Condition (\ref{gap}) and every $(B_{\epsilon})_{\epsilon>0}\in S(\alpha)$ (Proposition~\ref{RWAveraging}).
Based on such a result we then provide a proof of Theorem~\ref{Drive}.

\begin{lem} \label{fast}
For every $\alpha>0$ and every  $a,h\in C^{\infty}([0,1],\R)$, we have  $\int_0^\tau a(s) e^{i(\frac{s}{\epsilon^{\alpha+1}}+\frac{h(s)}{\epsilon})} ds=O(\epsilon^{\alpha+1})$ uniformly with respect to $\tau\in [0,1]$.
\end{lem}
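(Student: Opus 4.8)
The plan is to prove Lemma~\ref{fast} by integration by parts, treating the oscillatory exponential as the derivative of a slowly modulated phase. First I would observe that the phase $\varphi_\epsilon(s) = \frac{s}{\epsilon^{\alpha+1}} + \frac{h(s)}{\epsilon}$ has derivative $\varphi_\epsilon'(s) = \frac{1}{\epsilon^{\alpha+1}} + \frac{h'(s)}{\epsilon}$, and since $h' \in C^\infty([0,1],\R)$ is bounded and $\alpha > 0$, for $\epsilon$ small enough we have $\varphi_\epsilon'(s) \geq \frac{1}{2\epsilon^{\alpha+1}} > 0$ uniformly in $s \in [0,1]$. Thus $e^{i\varphi_\epsilon(s)} = \frac{1}{i\varphi_\epsilon'(s)}\frac{d}{ds}\bigl(e^{i\varphi_\epsilon(s)}\bigr)$, and I can write
\begin{equation*}
\int_0^\tau a(s) e^{i\varphi_\epsilon(s)}\,ds = \int_0^\tau \frac{a(s)}{i\varphi_\epsilon'(s)}\,\frac{d}{ds}\bigl(e^{i\varphi_\epsilon(s)}\bigr)\,ds.
\end{equation*}

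Next I would integrate by parts, obtaining a boundary term $\bigl[\frac{a(s)}{i\varphi_\epsilon'(s)}e^{i\varphi_\epsilon(s)}\bigr]_0^\tau$ and an integral term $-\int_0^\tau \frac{d}{ds}\!\bigl(\frac{a(s)}{i\varphi_\epsilon'(s)}\bigr) e^{i\varphi_\epsilon(s)}\,ds$. The key estimate is that $\frac{1}{\varphi_\epsilon'(s)} = \frac{\epsilon^{\alpha+1}}{1 + \epsilon^\alpha h'(s)} = O(\epsilon^{\alpha+1})$ uniformly in $s$, and similarly its derivative $\frac{d}{ds}\frac{1}{\varphi_\epsilon'(s)} = -\frac{\epsilon^{\alpha+2}h''(s)}{(1+\epsilon^\alpha h'(s))^2} = O(\epsilon^{\alpha+2})$ is uniformly bounded (in fact even smaller). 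Since $a$ and its derivative are bounded on $[0,1]$, the boundary term is $O(\epsilon^{\alpha+1})$ and the remaining integral term is bounded by $\int_0^\tau O(\epsilon^{\alpha+1})\,ds = O(\epsilon^{\alpha+1})$, giving the claimed bound uniformly with respect to $\tau \in [0,1]$.

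I do not expect a serious obstacle here; this is the standard non-stationary phase / Riemann--Lebesgue-with-rate argument. The only points requiring a little care are: ensuring $\varphi_\epsilon'$ does not vanish (handled by taking $\epsilon$ below a threshold depending only on $\|h'\|_\infty$, which is harmless for an asymptotic statement), and tracking that every constant produced is independent of $\tau$ — which follows because all the sup-norms involved ($\|a\|_\infty$, $\|a'\|_\infty$, $\|h'\|_\infty$, $\|h''\|_\infty$) are taken over the fixed interval $[0,1]$. One iteration of integration by parts already suffices for the $O(\epsilon^{\alpha+1})$ rate, so there is no need to iterate further; the smoothness of $a$ and $h$ is used only through their first two derivatives being bounded.
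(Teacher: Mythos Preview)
Your proof is correct and in fact cleaner than the paper's. Both arguments integrate by parts against the oscillatory exponential, but they differ in what they treat as the phase. The paper splits the exponent, integrating only $e^{is/\epsilon^{\alpha+1}}$ and leaving $a(s)e^{ih(s)/\epsilon}$ as the amplitude; differentiating that amplitude produces a term $i\frac{h'(s)}{\epsilon}a(s)$ of size $1/\epsilon$, so one integration by parts gains only a factor $\epsilon^{\alpha+1}\cdot\epsilon^{-1}=\epsilon^{\alpha}$, and the paper must iterate $\lceil 1/\alpha\rceil$ further times to reach $O(\epsilon^{\alpha+1})$. You instead take the full phase $\varphi_\epsilon(s)=\frac{s}{\epsilon^{\alpha+1}}+\frac{h(s)}{\epsilon}$ and divide by $\varphi_\epsilon'$, which is nonvanishing for small $\epsilon$ and satisfies $1/\varphi_\epsilon'=O(\epsilon^{\alpha+1})$; then a \emph{single} integration by parts already yields the bound. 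Your approach buys a shorter argument and uses fewer derivatives of $a,h$; the paper's approach has the minor advantage of never needing to check that $\varphi_\epsilon'\neq 0$, at the price of iteration.

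One small slip: the derivative you wrote as $-\dfrac{\epsilon^{\alpha+2}h''(s)}{(1+\epsilon^{\alpha}h'(s))^2}$ should be $-\dfrac{\epsilon^{2\alpha+1}h''(s)}{(1+\epsilon^{\alpha}h'(s))^2}$. Since $2\alpha+1>\alpha+1$ for $\alpha>0$, this is still $O(\epsilon^{\alpha+1})$ and your conclusion is unaffected.
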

\begin{proof}
 Integrating by parts, for every $\tau \in [0,1],$
\begin{align*}
&\int_0^\tau a(s) e^{i(\frac{s}{\epsilon^{\alpha+1}}+\frac{h(s)}{\epsilon})} ds \\
&=i \epsilon^{\alpha+1}\int_0^\tau e^{i \frac{s}{\epsilon^{\alpha+1}}} \left( a'(s)+i \frac{h'(s)}{\epsilon}a(s)\right)e^{i\frac{h(s)}{\epsilon}}ds\\
&+ \left[-i \epsilon^{\alpha+1} e^{i\frac{s}{\epsilon^{\alpha+1}}}a(s)e^{i\frac{h(s)}{\epsilon}}\right]_{0}^{\tau}\\
&=-\epsilon^{\alpha} \int_0^\tau h'(s)a(s) e^{i \frac{s}{\epsilon^{\alpha+1}}}   e^{i\frac{h(s)}{\epsilon}}ds +O(\epsilon^{\alpha+1}).
\end{align*}
Iterating the integration by parts on the integral term $\lceil\frac{1}{\alpha}\rceil$ more times, we get
$\int_0^\tau a(s) e^{i(\frac{s}{\epsilon^{\alpha+1}}+\frac{h(s)}{\epsilon})} ds=O(\epsilon^{\alpha+1})$.
\end{proof}

\begin{defi}\label{def:MP}
Let $\alpha>0$ and $(B_\epsilon)_{\epsilon>0}$ be in $S(\alpha)$.
For every $\epsilon>0$, $P\in C^\infty([0,1],U(n))$, and every diagonal matrix $\Gamma(\tau)=\text{diag}(\Gamma_j(\tau))_{j=1}^n$ with 
$\Gamma_j\in C^\infty([0,1],\R)$, $j=1,\dots,n$,
define 
\[M(P,\Gamma,\epsilon)(\tau)=e^{i\frac{\Gamma(\tau)}{\epsilon}}P^{*}(\tau)B_{\epsilon}(\tau)P(\tau)e^{-i\frac{\Gamma(\tau)}{\epsilon}}, \quad\tau\in [0,1].\]
\end{defi}

\begin{lem}\label{conjug}

Let $\alpha>1$. Consider $(B_\epsilon)_{\epsilon>0}$, 
$P$, $\Gamma$, and $M$ as in Definition~\ref{def:MP}.
Then $\int_0^\tau M(P,\Gamma,\epsilon)(s) ds=O(\epsilon^{\alpha})$ and 
$\int_0^\tau \left|M(P,\Gamma,\epsilon)(s)\right|\left|\int_0^s M(P,\Gamma,\epsilon)(x)dx\right| 
ds=O(\epsilon^{\alpha-1})$ uniformly w.r.t. $\tau\in [0,1]$.
\end{lem}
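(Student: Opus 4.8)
The plan is to reduce everything to Lemma~\ref{fast} by a suitable change of variables inside the matrix entries. First I would write out $M(P,\Gamma,\epsilon)(\tau)$ entry-by-entry. Since $(B_\epsilon)_{jj}=0$ and each off-diagonal entry has the form $(B_\epsilon(\tau))_{jk}=-\frac{i}{\epsilon}v_{jk}(\tau)e^{i(\beta_{jk}\tau/\epsilon^{\alpha+1}+h_{jk}(\tau)/\epsilon)}$ for $k>j$, conjugating by the unitary $P(\tau)$ (with $C^\infty$ entries, bounded with bounded derivatives on the compact interval $[0,1]$) and by the diagonal phase $e^{\pm i\Gamma(\tau)/\epsilon}$ produces entries that are finite sums of terms of the shape
\[
\frac{1}{\epsilon}\,a(s)\,e^{i\left(\frac{\beta\, s}{\epsilon^{\alpha+1}}+\frac{g(s)}{\epsilon}\right)},
\]
where $\beta\in\{\beta_{jk},-\beta_{jk}:k>j\}\setminus\{0\}$, $a\in C^\infty([0,1],\C)$ collects the smooth amplitude $v_{jk}$ together with the smooth entries of $P,P^{*}$, and $g\in C^\infty([0,1],\R)$ collects $\pm h_{jk}$ and the differences $\Gamma_j-\Gamma_k$ coming from the two diagonal phase factors. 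The crucial point is that $\beta\neq0$: because the $\beta_{jk}$ are nonzero by hypothesis and the conjugation by $P$ only mixes entries, never cancelling the fast phase $\beta_{jk}s/\epsilon^{\alpha+1}$ against anything (the phases $\Gamma_j/\epsilon$ live at a slower scale $1/\epsilon\ll1/\epsilon^{\alpha+1}$ since $\alpha>1$, so they cannot resonate with it).

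Next I would apply Lemma~\ref{fast} to each such term. The lemma is stated for $\beta=1$, but the general case follows by rescaling: substituting $s\mapsto s$ and absorbing $\beta$ via $e^{i\beta s/\epsilon^{\alpha+1}}=e^{i s/(\epsilon^{\alpha+1}/\beta)}$, or more cleanly by repeating the integration-by-parts argument of Lemma~\ref{fast} with the factor $\frac{d}{ds}e^{i\beta s/\epsilon^{\alpha+1}}=\frac{i\beta}{\epsilon^{\alpha+1}}e^{i\beta s/\epsilon^{\alpha+1}}$, which produces the same $O(\epsilon^{\alpha+1})$ bound with constants depending on $\beta$. Multiplying by the prefactor $1/\epsilon$ gives $\int_0^\tau M(P,\Gamma,\epsilon)(s)\,ds=O(\epsilon^{\alpha+1}/\epsilon)=O(\epsilon^{\alpha})$, uniformly in $\tau\in[0,1]$, which is the first claimed estimate. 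For the second estimate, I would bound $|M(P,\Gamma,\epsilon)(s)|\le \frac{c}{\epsilon}$ pointwise (again using that $P,P^{*}$ are bounded and $v_{jk}$ is continuous on a compact interval, while the phases have modulus $1$), and combine this with the already-established $\left|\int_0^s M(P,\Gamma,\epsilon)(x)\,dx\right|=O(\epsilon^{\alpha})$ to get
\[
\int_0^\tau\left|M(P,\Gamma,\epsilon)(s)\right|\left|\int_0^s M(P,\Gamma,\epsilon)(x)\,dx\right|ds
\le \frac{c}{\epsilon}\cdot O(\epsilon^{\alpha})\cdot\tau=O(\epsilon^{\alpha-1}),
\]
uniformly in $\tau\in[0,1]$.

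The main obstacle I anticipate is bookkeeping rather than conceptual: one must verify carefully that after conjugation the fast phase $\beta_{jk}\tau/\epsilon^{\alpha+1}$ is never killed and that the effective slow phase $g$ and amplitude $a$ are genuinely $C^\infty$ on $[0,1]$ (so Lemma~\ref{fast} applies). Writing $P^{*}B_\epsilon P=\sum_{j<k}(B_\epsilon)_{jk}P^{*}e_{jk}P+\text{h.c.}$ with $e_{jk}$ the elementary matrix makes this transparent: each resulting scalar coefficient is $\overline{P_{j\ell}}\,(B_\epsilon)_{jk}\,P_{km}$ (summed), and the extra diagonal conjugation multiplies the $(\ell,m)$ slot by $e^{i(\Gamma_\ell-\Gamma_m)/\epsilon}$ — all smooth, all at scale $\le 1/\epsilon$. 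A minor point worth stating explicitly is why $\alpha>1$ (rather than merely $\alpha>0$) is needed here even though Lemma~\ref{fast} holds for all $\alpha>0$: it guarantees $\epsilon^{\alpha-1}\to0$, so that the second estimate is genuinely $o(1)$, which is what Theorem~\ref{avnb} will consume to produce a vanishing error; and it also ensures the slow phases $1/\epsilon$ do not catch up with the fast phase $1/\epsilon^{\alpha+1}$, though the latter is not strictly needed for this lemma's proof.
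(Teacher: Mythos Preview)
Your proposal is correct and follows essentially the same route as the paper: decompose $M(P,\Gamma,\epsilon)$ entrywise into finitely many terms of the form $\tfrac{1}{\epsilon}a(s)e^{i(\beta s/\epsilon^{\alpha+1}+g(s)/\epsilon)}$, apply Lemma~\ref{fast} to each to get $O(\epsilon^{\alpha})$ for the first integral, and then use the crude pointwise bound $|M|=O(1/\epsilon)$ for the second estimate. Your explicit remarks on extending Lemma~\ref{fast} to $\beta\neq 1$ and complex amplitudes $a$ are a useful addition, since the paper applies the lemma directly to terms with amplitude $v_{j\ell}\,p_{\ell k}\,\bar p_{jq}$ and phase $\beta_{j\ell}\neq 1$ without commenting on this (harmless) mismatch.
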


\begin{proof}
Define the following matrix $C_{\epsilon}(\tau)=\frac{1}{\epsilon}v_{j\ell}(\tau)e^{i
(\frac{\beta_{j\ell}\tau}{\epsilon^{\alpha+1}}+\frac{h_{j\ell}(\tau)}{\epsilon}
)}E_{j\ell}$  for fixed $j,\ell\in \{1,\dots,n\}$ where $E_{j\ell}$ is the matrix whose coefficient $(j,\ell)$ is equal to $1$ and others are equal to $0$.
By direct computations, denoting $(P(\tau))_{kq}=p_{kq}(\tau),$ we get 
\begin{align*}
&e^{i\frac{\Gamma(s)}{\epsilon}}P^{*}(s)C_{\epsilon}(s)P(s)e^{-i\frac{\Gamma(s)}{\epsilon}}\\
&=\frac{v_{j\ell}(\tau)}{\epsilon}\sum_{k,q=1}^n p_{\ell k}(\tau)\bar{p}_{jq}(\tau)e^{\frac{i}{\epsilon} (\Gamma_q(\tau)-\Gamma_k(\tau))}e^{i(\frac{\beta_{j\ell}\tau}{\epsilon^{\alpha+1}}+\frac{h_{j\ell}(\tau)}{\epsilon})}E_{qk}.
\end{align*}
By Lemma \ref{fast}, we get for every $q,k \in \{1,\dots,n\}$, \[\int_0^\tau v_{j\ell}(s)  p_{\ell k}(s)\bar{p}_{jq}(s)e^{\frac{i}{\epsilon} (\Gamma_q(s)-\Gamma_k(s))}e^{i(\frac{\beta_{j\ell}s}{\epsilon^{\alpha+1}}+\frac{h_{j\ell}(s)}{\epsilon})}ds\] is $O(\epsilon^{\alpha+1})$.
Hence, 
$\int_0^\tau  e^{i\frac{\Gamma(s)}{\epsilon}}P(s)C_{\epsilon}(s)P^{*}(s)e^{-i\frac{\Gamma(s)}{\epsilon}} ds=O(\epsilon^{\alpha})$. 
We deduce by linearity that the result is also true for $B_{\epsilon}$.
The last 
claim follows noticing that $M(P,\Gamma,\epsilon)(\tau)=O(\frac{1}{\epsilon})$.
\end{proof}

\begin{lem}\label{killoscillations}
Let $\alpha>1$. Consider $(B_\epsilon)_{\epsilon>0}$, 
$P$, $\Gamma$, and $M$ as in Definition~\ref{def:MP}.
Then the flow of 
$\frac{dx(\tau)}{d\tau}=M(P,\Gamma,\epsilon)(\tau)x(\tau)$ 
is equal to 
$\text{Id}+O(\epsilon^{\alpha-1})$, 
uniformly w.r.t. $\tau \in [0,1]$.
\end{lem}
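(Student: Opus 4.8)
The plan is to read this off from the averaging estimate already proved: it is essentially the conclusion of Theorem~\ref{avnb} applied to the equation with ``$A$'' equal to $0$ and driving term $M(P,\Gamma,\epsilon)$ in place of ``$B_\epsilon$'', except that Lemma~\ref{conjug} supplies a stronger bound on the primitive of the driving term, which sharpens the exponent. Concretely, I would reproduce, in this special case, the short integration-by-parts argument from the proof of Theorem~\ref{avnb}.

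First I would check that $M(P,\Gamma,\epsilon)(\tau)\in u(n)$ for all $\tau\in[0,1]$ and $\epsilon>0$: since $\Gamma(\tau)$ is real and diagonal the matrix $e^{i\Gamma(\tau)/\epsilon}$ is unitary, and $P(\tau)\in U(n)$, so $M(P,\Gamma,\epsilon)(\tau)$ is a unitary conjugate of $B_\epsilon(\tau)\in u(n)$, hence lies in $u(n)$. Consequently the flow $R^\epsilon_\tau$ of $\frac{dx(\tau)}{d\tau}=M(P,\Gamma,\epsilon)(\tau)x(\tau)$ evolves in $U(n)$, so that $\|R^\epsilon_\tau\|=1$ uniformly in $\tau$ and $\epsilon$.

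Next, from the Volterra identity $R^\epsilon_\tau=\text{Id}+\int_0^\tau M(P,\Gamma,\epsilon)(s)R^\epsilon_s\,ds$, an integration by parts with the primitive $s\mapsto\int_0^s M(P,\Gamma,\epsilon)(x)\,dx$ gives
\[
R^\epsilon_\tau-\text{Id}=\Big(\int_0^\tau M(P,\Gamma,\epsilon)(s)\,ds\Big)R^\epsilon_\tau-\int_0^\tau\Big(\int_0^s M(P,\Gamma,\epsilon)(x)\,dx\Big)M(P,\Gamma,\epsilon)(s)\,R^\epsilon_s\,ds.
\]
Taking norms, using $\|R^\epsilon_\cdot\|\le 1$, and inserting the two estimates of Lemma~\ref{conjug}, namely $\big|\int_0^\tau M(P,\Gamma,\epsilon)(s)\,ds\big|=O(\epsilon^\alpha)$ and $\int_0^\tau|M(P,\Gamma,\epsilon)(s)|\,\big|\int_0^s M(P,\Gamma,\epsilon)(x)\,dx\big|\,ds=O(\epsilon^{\alpha-1})$, one obtains $\|R^\epsilon_\tau-\text{Id}\|\le C_1\epsilon^\alpha+C_2\epsilon^{\alpha-1}$ uniformly in $\tau\in[0,1]$. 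Since $\alpha>1$, for $\epsilon$ small one has $\epsilon^\alpha\le\epsilon^{\alpha-1}$, so the right-hand side is $O(\epsilon^{\alpha-1})$, which is the claim. (In contrast to Theorem~\ref{avnb}, where the primitive of the driving term is only $O(\epsilon)$ and the exponent is therefore truncated at $1$, here the $O(\epsilon^\alpha)$ bound from Lemma~\ref{conjug} removes that truncation; and since the unperturbed dynamics is trivial, there is no conjugated equation and no Gronwall step.)

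I do not anticipate a genuine obstacle: this is a two-line estimate once Lemma~\ref{conjug} is in hand. The only points needing a word of care are verifying $M(P,\Gamma,\epsilon)(\tau)\in u(n)$ — so that $R^\epsilon_\tau$ is unitary and the a priori bound on it is free — and the elementary bookkeeping that both error terms produced by the integration by parts are absorbed into $O(\epsilon^{\alpha-1})$, which is precisely where the hypothesis $\alpha>1$ enters.
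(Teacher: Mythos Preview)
Your proof is correct and follows essentially the same approach as the paper, which simply invokes Theorem~\ref{avnb} with $A=0$ together with the estimates from Lemma~\ref{conjug}. In fact your explicit reproduction of the integration-by-parts step is slightly sharper: a black-box application of Theorem~\ref{avnb} as stated would only yield $O(\epsilon^{\min(1,\alpha-1)})$ (its hypothesis on the primitive being merely $O(\epsilon)$), whereas by feeding in the $O(\epsilon^\alpha)$ bound from Lemma~\ref{conjug} directly you recover the full $O(\epsilon^{\alpha-1})$ claimed in the lemma.
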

\begin{proof}
We apply Theorem \ref{avnb} using the estimates from Lemma~\ref{conjug}.
\end{proof}

The next proposition, based on Lemma \ref{killoscillations}, shows that under the condition $\alpha>1$, an adiabatic trajectory is robust with respect to perturbations of the dynamics by a term of the form $(B_{\epsilon})_{\epsilon>0}\in S(\alpha)$ for $\epsilon$ small. 
\begin{prop}\label{RWAveraging}
Consider  $A\in C^\infty([0,1],u(n))$ and $(B_{\epsilon})_{\epsilon>0}\in S(\alpha)$
with $\alpha>1$.
Assume that Condition (\ref{gap}) is satisfied.
Select $\lambda_j\in C^\infty([0,1],\R)$, $j=1,\dots,n$,  and $P\in C^\infty([0,1],U(n))$  such that, for $j=1,\dots,n$, $\lambda_j(\tau)$  and the $j$-th column of $P(\tau)$ are, respectively,  an eigenvalue of $iA(\tau)$ and a corresponding eigenvector (the  existence of $C^\infty$ eigenpairs being guaranteed by 
Lemma~\ref{reg}).
Define
 $\Lambda(\tau)=\text{diag}(\lambda_j(\tau))_{j=1}^n$, $\tau\in [0,1]$.
Fix $X_0\in \C^n$ independent of $\epsilon$.
Let $X_{\epsilon}$ be the solution of $\frac{dX_{\epsilon}(\tau)}{d\tau}= \left(\frac{1}{\epsilon}A(\tau)+B_{\epsilon}(\tau)\right)X_{\epsilon}(\tau)$ such that $X_{\epsilon}(0)=X_{0}$.
Set $\Upsilon_{\epsilon}(\tau)=P(\tau) \exp\left(\frac{-i}{\epsilon} \int_0^{\tau} \Lambda(s) ds\right)  \exp\left(\int_{0}^{\tau} D(s)ds\right) P^{*}(0)$ where $D$ is equal to the diagonal part of $\frac{dP^{*}}{d\tau}P$. 
Then
 $\|X_{\epsilon}(\tau) -\Upsilon_{\epsilon}(\tau) X_0\|<c \epsilon^{\text{min}(1,\alpha-1)}$
for some constant $c>0$ independent of $\tau\in [0,1]$ and $\epsilon>0$.

\begin{proof}
Define $\Gamma(\tau)=\int_0^\tau \Lambda(s) ds$ and $Y_{\epsilon}(\tau)=\exp\left(\frac{i}{\epsilon} \Gamma(\tau) \right) P^{*}(\tau) X_{\epsilon} (\tau)$. 
Then $Y_{\epsilon}$ satisfies the equation
\begin{equation}\label{eqpert}
 \begin{split}
 &\frac{dY_{\epsilon}(\tau)}{d\tau}=M(P,\Gamma,\epsilon)(\tau)Y_{\epsilon}(\tau)\\
 +&\exp\left(\frac{i}{\epsilon} \Gamma(\tau)\right) \frac{dP^{*}}{d\tau}(\tau) P(\tau) \exp\left(-\frac{i}{\epsilon}\Gamma(\tau) \right)Y_{\epsilon}(\tau),
 \end{split}
\end{equation} 
where $M(P,\Gamma,\epsilon)$ is defined as in Definition~\ref{def:MP}.
In order to simplify the notations, set $D_{\epsilon}(\tau)=\exp\left(\frac{i}{\epsilon} \Gamma(\tau)\right) \frac{dP^{*}}{d\tau}(\tau) P(\tau) \exp\left(-\frac{i}{\epsilon} \Gamma(\tau) \right)$ and denote the flow at time $\tau$ of the equations $\frac{dx(\tau)}{d\tau}= M(P,\Gamma,\epsilon)(\tau)x(\tau)$  and $\frac{dx(\tau)}{d\tau}=(P_{\tau}^{\epsilon})^{-1} D_{\epsilon}(\tau) P_{\tau}^{\epsilon}x(\tau)$ by $P_{\tau}^{\epsilon}$ and  $W_{\tau}^{\epsilon}$, respectively.
By the variation formula (Proposition~\ref{variation}), we get that the flow at time $\tau$ of  equation (\ref{eqpert}) is equal to $Q_{\tau}^{\epsilon} =P_{\tau}^{\epsilon}W_{\tau}^{\epsilon} $.
By Lemma \ref{killoscillations}, we have $P_{\tau}^{\epsilon}=\text{Id} +O(\epsilon^{\alpha-1})$.
Hence $(P_{\tau}^{\epsilon})^{-1} D_{\epsilon}(\tau)P_{\tau}^{\epsilon}=D_{\epsilon}(\tau)+O(\epsilon^{\alpha-1})$.
Using the gap condition (\ref{gap}), we have the estimate $\int_0^\tau D_{\epsilon}(s)ds=\int_0^\tau D(s) ds+O(\epsilon)$ uniformly with respect to $\tau \in [0,1]$.
Indeed, 
$(D_{\epsilon}(\tau))_{jl}=q_{jl}(\tau)e^{\frac{i}{\epsilon}\int_0^\tau (\lambda_j(s)-\lambda_l(s))ds}$, $j,l\in\{1,\dots,n\}$, where $q_{jl}$ is $C^{\infty}$. Hence we get the expected estimation by a direct estimation of the integral of the oscillating term $e^{\frac{i}{\epsilon}\int_0^\tau (\lambda_j(s)-\lambda_l(s))}ds$, $j,l\in\{1,\dots,n\}$.
Moreover, since $D_{\epsilon}$ is bounded with respect to $\epsilon$, Theorem \ref{av} ensures that  $W_{\tau}^{\epsilon}=\exp \left(\int_0^\tau D(s) ds\right)+O(\epsilon^{\text{min}(1,\alpha-1)})$.
It follows that 
\begin{align*}
Q_{\tau}^{\epsilon} =&\left(\text{Id}+O(\epsilon^{\alpha-1})\right)\left( \exp \left(\int_0^\tau D(s) ds\right)+O(\epsilon^{\text{min}(1,\alpha-1)})\right)\\
=&\exp \left(\int_0^\tau D(s) ds\right)+O(\epsilon^{\text{min}(1,\alpha-1)}).
\end{align*}
\end{proof}
\end{prop}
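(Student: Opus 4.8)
\emph{Proof strategy.} The plan is to conjugate the dynamics by a time-dependent unitary that diagonalises $iA$ and simultaneously removes the fast oscillation $\exp(-\tfrac{i}{\epsilon}\int_0^\tau\Lambda)$, and then to treat separately the two pieces of the transformed equation: a conjugated copy of the perturbation $B_\epsilon$, which has amplitude only $O(1/\epsilon)$ and requires the refined averaging of Theorem~\ref{avnb}, and a conjugated copy of the geometric term $\frac{dP^*}{d\tau}P$, which is bounded in $\epsilon$ and can be handled by the elementary Theorem~\ref{av}.

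Concretely, I would set $\Gamma(\tau)=\int_0^\tau\Lambda(s)\,ds$ and $Y_\epsilon(\tau)=\exp(\tfrac{i}{\epsilon}\Gamma(\tau))P^*(\tau)X_\epsilon(\tau)$, a unitary change of coordinates. Using $A=-iP\Lambda P^*$ (valid since the columns of $P$ are eigenvectors of $iA$ with eigenvalues $\lambda_j$), one checks that the $\tfrac1\epsilon A$ contribution is cancelled exactly against $\tfrac{i}{\epsilon}\dot\Gamma=\tfrac{i}{\epsilon}\Lambda$, so that $Y_\epsilon$ solves
\[
\frac{dY_\epsilon}{d\tau}=M(P,\Gamma,\epsilon)(\tau)\,Y_\epsilon(\tau)+D_\epsilon(\tau)\,Y_\epsilon(\tau),
\]
with $M(P,\Gamma,\epsilon)$ as in Definition~\ref{def:MP} and $D_\epsilon(\tau)=\exp(\tfrac{i}{\epsilon}\Gamma(\tau))\,\frac{dP^*}{d\tau}(\tau)P(\tau)\,\exp(-\tfrac{i}{\epsilon}\Gamma(\tau))$. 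Since the change of coordinates is unitary, it suffices to estimate the flow $Q_\tau^\epsilon$ of this equation and compare it with $\exp(\int_0^\tau D(s)\,ds)$.

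Next I would apply the variation formula (Proposition~\ref{variation}) to factor $Q_\tau^\epsilon=P_\tau^\epsilon W_\tau^\epsilon$, where $P_\tau^\epsilon$ is the flow of $\dot x=M(P,\Gamma,\epsilon)x$ and $W_\tau^\epsilon$ that of $\dot x=(P_\tau^\epsilon)^{-1}D_\epsilon P_\tau^\epsilon x$. Lemma~\ref{killoscillations} (this is the step using $\alpha>1$ and, through Lemma~\ref{conjug} and Theorem~\ref{avnb}, the gap condition) gives $P_\tau^\epsilon=\mathrm{Id}+O(\epsilon^{\alpha-1})$; since $D_\epsilon$ is uniformly bounded in $(\tau,\epsilon)$, this yields $(P_\tau^\epsilon)^{-1}D_\epsilon P_\tau^\epsilon=D_\epsilon+O(\epsilon^{\alpha-1})$. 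For the other factor, observe that the $(j,l)$ entry of $D_\epsilon$ equals $q_{jl}(\tau)\exp(\tfrac{i}{\epsilon}\int_0^\tau(\lambda_j-\lambda_l))$ with $q_{jl}$ smooth: the diagonal entries ($j=l$) are unaffected by the conjugation and coincide with those of $D$, while for $j\neq l$ Condition~(\ref{gap}) makes $\lambda_j-\lambda_l$ bounded away from zero, so one integration by parts gives $\int_0^\tau D_\epsilon(s)\,ds=\int_0^\tau D(s)\,ds+O(\epsilon)$. Combining, $\int_0^\tau(P_s^\epsilon)^{-1}D_\epsilon(s)P_s^\epsilon\,ds=\int_0^\tau D(s)\,ds+O(\epsilon^{\min(1,\alpha-1)})$, so Theorem~\ref{av} (whose boundedness hypothesis holds for $D_\epsilon$) gives $W_\tau^\epsilon=\exp(\int_0^\tau D(s)\,ds)+O(\epsilon^{\min(1,\alpha-1)})$, and multiplying the two estimates yields $Q_\tau^\epsilon=\exp(\int_0^\tau D(s)\,ds)+O(\epsilon^{\min(1,\alpha-1)})$.

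To conclude, I would undo the change of variables: since $\Gamma(0)=0$ we have $Y_\epsilon(0)=P^*(0)X_0$ and $X_\epsilon(\tau)=P(\tau)\exp(-\tfrac{i}{\epsilon}\Gamma(\tau))Q_\tau^\epsilon P^*(0)X_0$, whereas $\Upsilon_\epsilon(\tau)X_0=P(\tau)\exp(-\tfrac{i}{\epsilon}\Gamma(\tau))\exp(\int_0^\tau D(s)\,ds)P^*(0)X_0$; the prefactors being unitary, $\|X_\epsilon(\tau)-\Upsilon_\epsilon(\tau)X_0\|\le\|Q_\tau^\epsilon-\exp(\int_0^\tau D(s)\,ds)\|\,\|X_0\|=O(\epsilon^{\min(1,\alpha-1)})$. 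I expect the main obstacle to be the bookkeeping across the two averaging regimes: the perturbation $B_\epsilon$ is only $O(1/\epsilon)$ and so cannot be averaged out naively — it is only after the diagonalising conjugation and the sharpened oscillatory estimates of Lemma~\ref{conjug} that the relevant iterated integral is small enough for Theorem~\ref{avnb} to deliver $P_\tau^\epsilon=\mathrm{Id}+O(\epsilon^{\alpha-1})$ — while one must still check that the bounded geometric term $D_\epsilon$ carries oscillations that are killed only thanks to the spectral gap, and keep the exponents $\alpha-1$ and $1$ straight when they are multiplied together.
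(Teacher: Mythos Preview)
Your proposal is correct and follows essentially the same route as the paper: the same unitary change of variables $Y_\epsilon=\exp(\tfrac{i}{\epsilon}\Gamma)P^*X_\epsilon$, the same variation-formula factoring $Q_\tau^\epsilon=P_\tau^\epsilon W_\tau^\epsilon$, Lemma~\ref{killoscillations} for $P_\tau^\epsilon$, and Theorem~\ref{av} together with the gap-based oscillatory estimate for $W_\tau^\epsilon$. You even spell out the final undoing of the change of variables, which the paper leaves implicit. One small correction to your parenthetical: Lemma~\ref{killoscillations} does not use the gap condition (it holds for arbitrary smooth $P$ and $\Gamma$); the gap is needed only in the averaging of $D_\epsilon$.
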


\begin{proof}(Proof of Theorem \ref{Drive})
By an easy application of Theorem \ref{av}, we get the adiabatic estimate
$\forall \tau\in [0,1], \|\hat{X}_{\epsilon}(\tau) -\Upsilon_{\epsilon}(\tau) X_0\|<C \epsilon$, where $C>0$ is independent of $\tau,\epsilon$ and $\Upsilon_{\epsilon}$ is defined as in Proposition~\ref{RWAveraging}.
The result is then obtained combining the previous inequality with the estimate of Proposition \ref{RWAveraging} by triangular inequality.
\end{proof}

\subsection{1-parameter family case}

\begin{defi}
For $A^{\delta}(\tau)\in u(n)$ whose dependence on $(\tau,\delta)\in [0,1]\times [a,b]$ is $C^{\infty}$, define $\Lambda^{\delta}(\tau)=\text{diag}(\lambda_j^{\delta}(\tau))_{j\in \{1,\dots,n\}}$ where  $j\mapsto \lambda_j^{\delta}(\tau)$ is the nondecreasing sequence of eigenvalues of $iA^{\delta}(\tau)$.
We say that $A$ satisfies a uniform gap condition if 
there exists $C>0$ such that 
	\begin{equation*}\label{ugap}
	\tag{UGAP}
	\forall k\ne j, \forall \delta\in [a,b], \forall \tau\in [0,1],\ |\lambda_k^{\delta}(\tau) -\lambda_j^{\delta}(\tau) |>C.
	\end{equation*}
\end{defi}
Using uniform estimates with respect to $\delta\in [a,b]$ in the proof of Proposition \ref{RWAveraging}, we get the following theorem.
\begin{teo}\label{UDrive}
Consider $(B_{\epsilon})_{\epsilon>0}\in S(\alpha)$ with $\alpha>1$.
Let $(A^{\delta}(\tau))_{\delta\in[a,b]}$ be a family of matrices in $u(n)$ whose dependence in $(\tau,\delta)\in [0,1]\times [a,b]$ is $C^{\infty}$. Assume that $A(\tau)$ satisfies (\ref{ugap}).
Fix $X_0\in \C^n$ independent of $\epsilon$.
Let $X_{\epsilon}(\delta,\tau)$ be the solution of $\frac{dX_{\epsilon}(\tau)}{d\tau}= \left(\frac{1}{\epsilon}A^{\delta}(\tau)+\delta B_{\epsilon}(\tau)\right)X_{\epsilon}(\tau)$ such that $X_{\epsilon}(0)=X_{0}$ and $\hat{X}_{\epsilon}(\delta,\tau)$ be the solution of $\frac{d\hat{X}_{\epsilon}(\tau)}{d\tau}= \frac{1}{\epsilon}A^{\delta}(\tau)\hat{X}_{\epsilon}(\tau)$ such that $\hat{X}_{\epsilon}(0)=X_{0}$.
Then there exists $c>0$ independent of $\tau, \delta, \epsilon$ such that for every $(\tau,\delta) \in [0,1]\times [a,b]$, $\|X_{\epsilon}(\delta,\tau)-\hat{X}_{\epsilon}(\delta,\tau)\|\leq c \epsilon^{\text{min}(1,\alpha-1)}$.
\end{teo}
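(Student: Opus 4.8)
The plan is to rerun the proof of Proposition~\ref{RWAveraging} and of Theorem~\ref{Drive}, carrying the parameter $\delta$ along and checking that every $O(\cdot)$ can be made uniform over the compact interval $[a,b]$. First I would dispose of the degenerate case: if $0\in[a,b]$, then for $\delta=0$ the two Cauchy problems coincide and the bound is trivial, so it suffices to treat $\delta$ in a compact set bounded away from $0$, for which $\delta B_{\epsilon}\in S(\alpha)$ with the same frequencies $\beta_{jk}$ and amplitudes $\delta v_{jk}$. Next I would record a parametrized version of Lemma~\ref{reg}: under (\ref{ugap}) the eigenvalues $\lambda_j^{\delta}(\tau)$ of $iA^{\delta}(\tau)$ and an associated orthonormal frame $P^{\delta}(\tau)$ can be chosen $C^{\infty}$ \emph{jointly} in $(\tau,\delta)\in[0,1]\times[a,b]$, because the uniform spectral gap lets one build Riesz spectral projectors depending smoothly on both variables exactly as in the one-variable case.

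With $\Lambda^{\delta}(\tau)=\text{diag}(\lambda_j^{\delta}(\tau))$ and $\Gamma^{\delta}(\tau)=\int_0^{\tau}\Lambda^{\delta}(s)\,ds$, set $Y_{\epsilon}(\delta,\tau)=\exp(\tfrac i\epsilon\Gamma^{\delta}(\tau))(P^{\delta})^{*}(\tau)X_{\epsilon}(\delta,\tau)$. As in (\ref{eqpert}) it solves an equation whose right-hand side is $\big(\delta\, M(P^{\delta},\Gamma^{\delta},\epsilon)(\tau)+D_{\epsilon}^{\delta}(\tau)\big)Y_{\epsilon}$, where $D_{\epsilon}^{\delta}$ is the conjugation of $\frac{d(P^{\delta})^{*}}{d\tau}P^{\delta}$ by $\exp(\tfrac i\epsilon\Gamma^{\delta})$. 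The point is that Lemmas~\ref{fast}, \ref{conjug}, \ref{killoscillations} and Theorem~\ref{av} hold with constants uniform in $\delta$: the $O(\epsilon^{\alpha+1})$ in Lemma~\ref{fast} is governed by finitely many sup-norms of the integrand and of $h'$, and these, built from $P^{\delta}$, $\Lambda^{\delta}$ and the data of $B_{\epsilon}$, are uniformly bounded over $[a,b]$ by joint smoothness and compactness. Hence $\int_0^{\tau}\delta M(P^{\delta},\Gamma^{\delta},\epsilon)(s)\,ds=O(\epsilon^{\alpha})$ and the associated quadratic integral is $O(\epsilon^{\alpha-1})$ uniformly, so by Theorem~\ref{avnb} the flow $P_{\tau}^{\epsilon,\delta}$ of $\dot x=\delta M(P^{\delta},\Gamma^{\delta},\epsilon)x$ equals $\text{Id}+O(\epsilon^{\alpha-1})$ uniformly. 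For the diagonal part, $(D_{\epsilon}^{\delta}(\tau))_{jl}=q_{jl}^{\delta}(\tau)e^{\frac i\epsilon\int_0^{\tau}(\lambda_j^{\delta}-\lambda_l^{\delta})}$ with $q_{jl}^{\delta}$ jointly smooth, and (\ref{ugap}) gives $|\lambda_j^{\delta}-\lambda_l^{\delta}|>C$ for all $\delta$, so integration by parts yields $\int_0^{\tau}D_{\epsilon}^{\delta}(s)\,ds=\int_0^{\tau}D^{\delta}(s)\,ds+O(\epsilon)$ uniformly; the Gronwall argument behind Theorem~\ref{av} (which uses only uniform bounds) then gives the flow $W_{\tau}^{\epsilon,\delta}=\exp(\int_0^{\tau}D^{\delta}(s)\,ds)+O(\epsilon^{\min(1,\alpha-1)})$ uniformly. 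The variation formula (Proposition~\ref{variation}) combines these to $Q_{\tau}^{\epsilon,\delta}=\exp(\int_0^{\tau}D^{\delta}(s)\,ds)+O(\epsilon^{\min(1,\alpha-1)})$, hence $\|X_{\epsilon}(\delta,\tau)-\Upsilon_{\epsilon}^{\delta}(\tau)X_0\|<c\,\epsilon^{\min(1,\alpha-1)}$ uniformly, where $\Upsilon_{\epsilon}^{\delta}$ is the $\delta$-analogue of the propagator of Proposition~\ref{RWAveraging}.

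Finally, the same chain of estimates with $B_{\epsilon}$ replaced by $0$ gives the adiabatic bound $\|\hat X_{\epsilon}(\delta,\tau)-\Upsilon_{\epsilon}^{\delta}(\tau)X_0\|<C\epsilon$ uniformly in $(\tau,\delta)$, and the triangle inequality concludes. The main obstacle is not a new idea but the bookkeeping: one must check that the spectral data are smooth \emph{jointly} in $(\tau,\delta)$ under (\ref{ugap}) (rather than merely smooth in $\tau$ for each fixed $\delta$), and that each application of Gronwall's lemma and of the integration-by-parts estimates produces a constant depending only on sup-norms of data that are themselves uniformly bounded over $[a,b]$. Once the parametrized versions of Lemma~\ref{reg}, Lemma~\ref{killoscillations} and Theorem~\ref{av} are in place, the argument is a verbatim rerun of Proposition~\ref{RWAveraging} and of the proof of Theorem~\ref{Drive} with $\delta$ carried through.
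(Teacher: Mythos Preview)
Your proposal is correct and matches the paper's approach exactly: the paper's entire proof is the single sentence ``Using uniform estimates with respect to $\delta\in [a,b]$ in the proof of Proposition~\ref{RWAveraging}, we get the following theorem,'' and you have spelled out precisely those uniform estimates (joint smoothness of the spectral data under (\ref{ugap}), uniformity of Lemmas~\ref{fast}--\ref{killoscillations} and of the Gronwall/averaging steps over the compact parameter range, then the triangle inequality as in Theorem~\ref{Drive}). Your aside about $0\in[a,b]$ is unnecessary---$\delta B_{\epsilon}\in S(\alpha)$ for every real $\delta$, with amplitudes $\delta v_{jk}$, and the constants produced by Lemmas~\ref{fast} and~\ref{conjug} depend only on sup-norms that are uniformly bounded over $[a,b]$ regardless of whether $0$ lies in it---but it does no harm.
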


\section{Control of two-level systems}\label{Spinup}

We start this section proving Theorem \ref{RWA}.

\begin{proof}(Proof of Theorem \ref{RWA}) Apply the unitary transformation
$X_{\epsilon}(\tau)=U_{\epsilon}(\tau)\psi_{\epsilon}(\tau)$ where 
\[U_{\epsilon}(\tau)=\begin{pmatrix}
e^{-i\left(\frac{E\tau}{\epsilon^{\alpha+1}}+\frac{\phi(\tau)}{2\epsilon}\right)}&0\\
0&e^{i\left(\frac{E\tau}{\epsilon^{\alpha+1}}+\frac{\phi(\tau)}{2\epsilon}\right)}\\
\end{pmatrix}.\]
Then $X_{\epsilon}$ satisfies 
$\frac{dX_{\epsilon}(\tau)}{d\tau}=\left(\frac{1}{\epsilon}A(\tau)+B_{\epsilon}(\tau)\right) X_{\epsilon}(\tau)$, $X_{\epsilon}(0)=\psi_0$, 
 where $A(\tau)=-i\begin{pmatrix}
-\phi'(\tau)/2&v(\tau)\\
v(\tau)&\phi'(\tau)/2\\
\end{pmatrix} $ and $B_{\epsilon}(\tau)=
 \frac{-i}{\epsilon} 
\begin{pmatrix}
0&v(\tau)e^{i(\frac{4E\tau}{\epsilon^{\alpha+1}}+\frac{2 \phi(\tau)}{\epsilon})}\\
v(\tau)e^{-i(\frac{4E\tau}{\epsilon^{\alpha+1}} +\frac{2 \phi(\tau)}{\epsilon})}&0\\
\end{pmatrix}$.
The condition  $v(\tau)^2+\frac{\phi'(\tau)^2}{4}>C$ for every $\tau\in [0,1]$ implies that $A$ satisfies 
Condition (\ref{gap}).
Let $\hat{X}_{\epsilon}:[0,1]\to \C^2$ be the solution of 
$\frac{d\hat{X}_{\epsilon}(\tau)}{d\tau}=\frac{1}{\epsilon}A(\tau) \hat{X}_{\epsilon}(\tau),
\quad \hat{X}_{\epsilon}(0)=\psi_0$.
Theorem \ref{Drive} 
then implies that $\|X_{\epsilon}(\tau)-\hat{X}_{\epsilon}(\tau)\|\leq c \epsilon^{\text{min}(1,\alpha-1)} $.
Noticing that $\tilde{\psi}_{\epsilon}(\tau)=U_{\epsilon}^{*}(\tau)\hat{X}_{\epsilon}(\tau)$, we get the result.
\end{proof}

\subsection{Control strategy for two-level systems and simulations}
\label{control}
 Let $v,\phi\in C^{\infty}([0,1],\R)$ be such that $v(0)=v(1)=0$, 
 $\phi(0)=0$, $\phi'(0)\phi'(1)<0$,  and  
 $v(\tau)\neq 0$ for $\tau \in (0,1)$.
Let $e_1=\begin{pmatrix}
1\\
0
\end{pmatrix}$ and $e_2=\begin{pmatrix}
0\\
1
\end{pmatrix}$.
By adiabatic approximation, the solution $x_{\epsilon}:[0,1]\to\C^2$ of 
\begin{equation*}
\frac{dx_{\epsilon}(\tau)}{d\tau}=-\frac{i}{\epsilon}\begin{pmatrix}
-\phi'(\tau)/2&v(\tau)\\
v(\tau)&\phi'(\tau)/2\\
\end{pmatrix} x_{\epsilon}(\tau),
\quad x_{\epsilon}(0)=e_1,
\end{equation*}
satisfies
$\|x_{\epsilon}(1)-e^{i\xi_{\epsilon}}e_2\|\leq C \epsilon$ where $C>0$ is independent of $\epsilon$ and $\xi_{\epsilon}\in \R$.
Consider the solution $\psi_{\epsilon}(\tau)$ of Equation (\ref{spin}) such that
 $\psi_0=e_1$ and corresponding to the controls $(v,\phi)$.
Applying Theorem \ref{RWA}, we have 
$\|\psi_{\epsilon}(1)-e^{i\theta_{\epsilon}}e_2\|\leq c \epsilon^{\text{min}(1,\alpha-1)}$ where $c>0$ is independent of $\epsilon$ and $\theta_{\epsilon}\in \R$.

On Figure \ref{realcontrol}, we have plotted the projection of the wave function onto 
$e_2$
for $v(\tau)=\sin(\pi \tau), \quad \phi(\tau)=-\frac{1}{\pi}\sin(\pi \tau)$,
with $\epsilon=0.01$, $\alpha=1.5$ and $E=1$ 
in the fast time scale, that is,  
as a function of 
$\tau\in [0,1]$.
The total time needed by our control strategy in the 
variable $t=\frac{\tau}{\epsilon^{\alpha+1}}$ is $T=
\frac{1}{\epsilon^{\alpha+1}}$.
On Figure \ref{diff}, we have plotted the norm of the difference between $\psi_{\epsilon}$ and the solution of Equation (\ref{spincomplexe}) with the same initial condition and parameters as a function of $\tau\in [0,1]$.
\begin{figure}
\begin{center}
\includegraphics[scale=0.4]{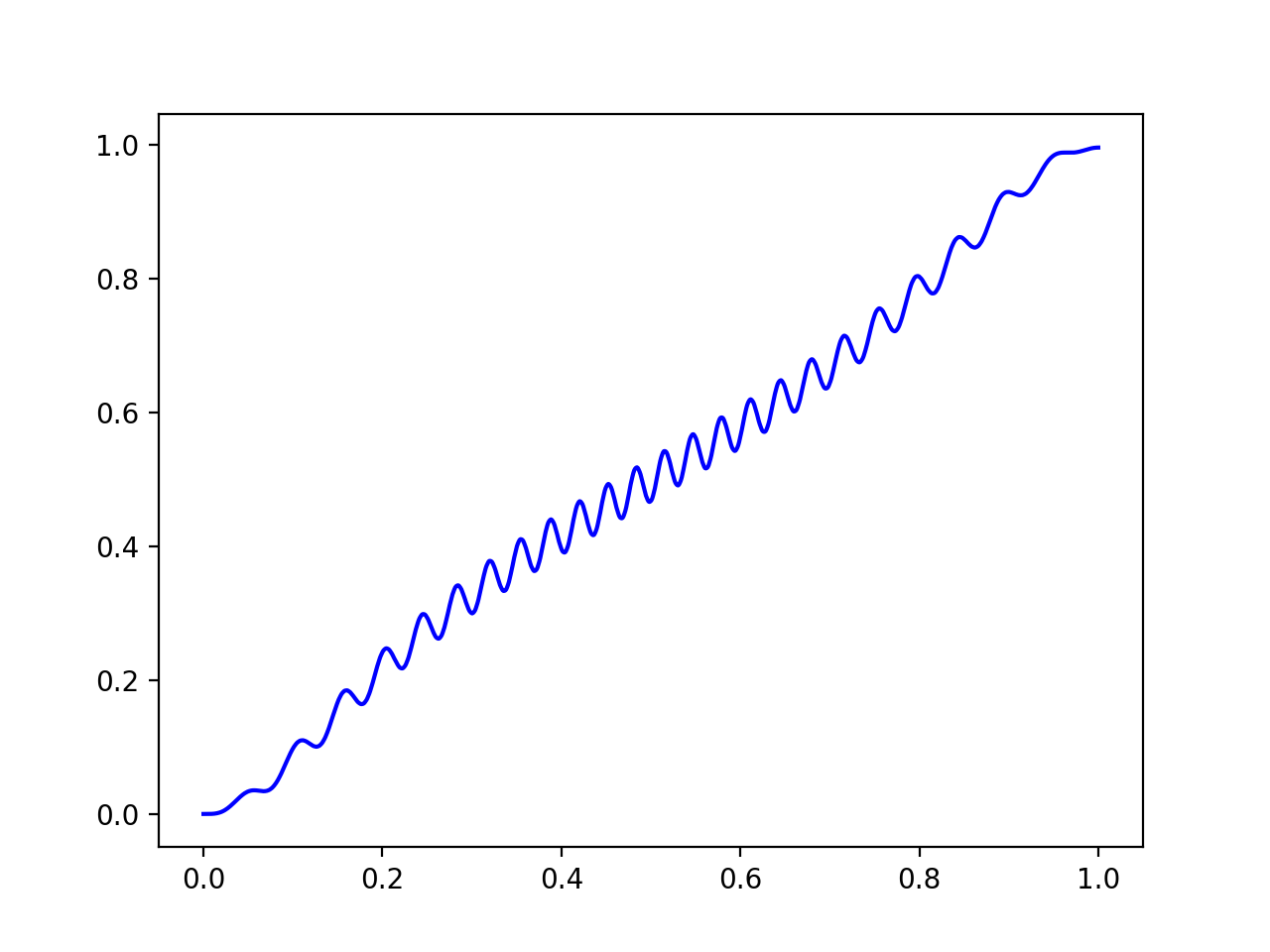}
\caption{Fidelity $|\langle \psi_{\epsilon}(\tau) , e_2
\rangle|^2$ as a function of the time variable $\tau\in [0,1]$ with $\epsilon=0.01$, $\alpha=1.5$, and $E=1$.}
\label{realcontrol}
\end{center}
\end{figure}
\begin{figure}
\begin{center}
\includegraphics[scale=0.35]{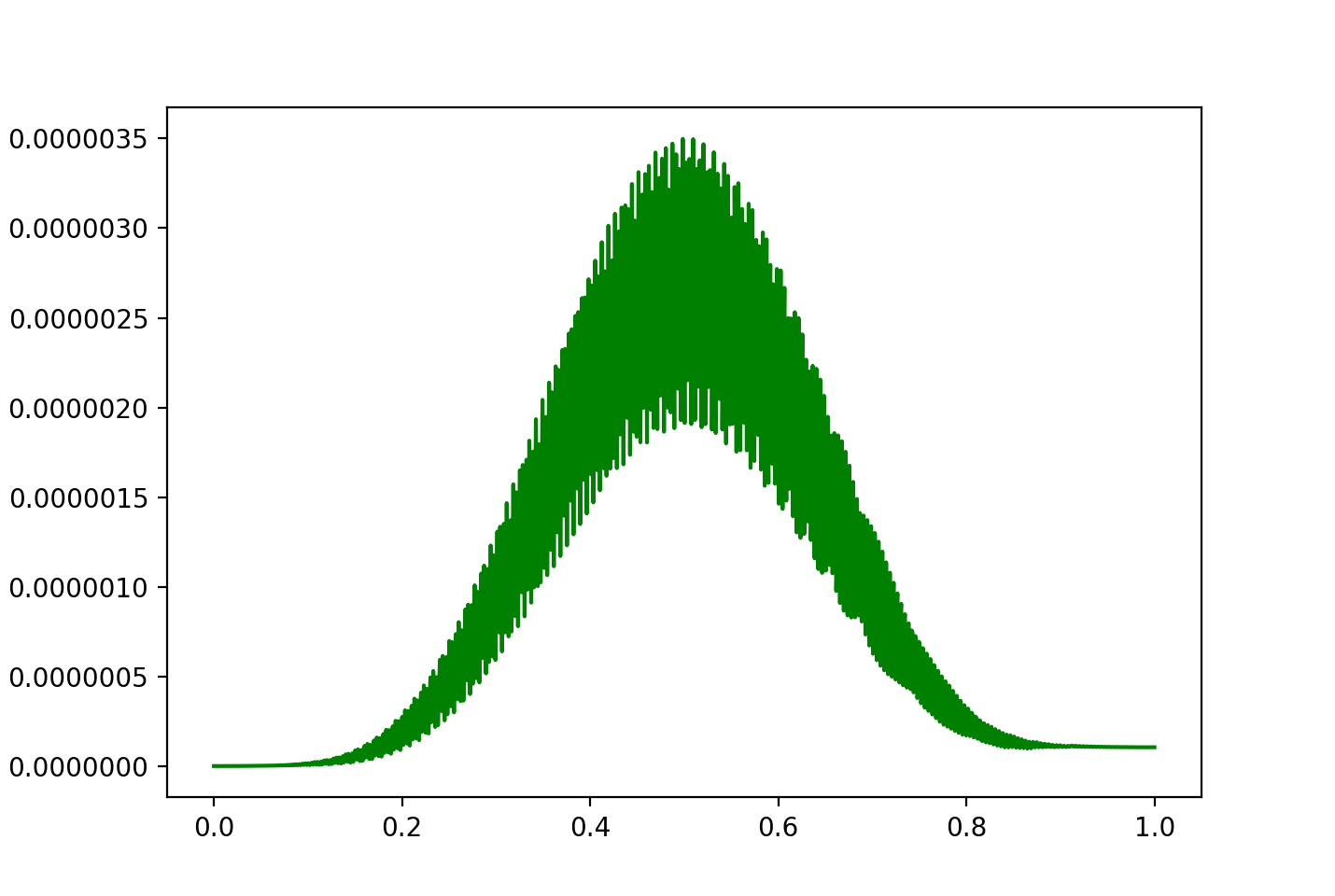}
\caption{Squared norm of the difference between 
$\psi_\epsilon$ and $\hat\psi_\epsilon$  as a function of the time variable $\tau\in [0,1]$
with $\epsilon=0.01$, $\alpha=1.5$, and $E=1$.}
\label{diff}
\end{center}
\end{figure}
\subsection{Robustness of the control strategy with respect to amplitude of control inhomogeneities}
Let $U$ be a connected open set of $\R$ containing $0$.
\begin{teo}
Let $E\in\R\setminus\{0\}$.
The equation 
\begin{equation}\label{robust}
i\frac{d\psi}{dt}=\begin{pmatrix}
E&\delta u\\
\delta u&-E
\end{pmatrix}
\psi
\end{equation}
is approximately ensemble controllable between the eigenstates of $H_0=\begin{pmatrix}
E&0\\
0&-E
\end{pmatrix}$
uniformly with respect to $\delta \in [a,b]\subset (0,+\infty)$ and $u\in U$, that is,
 for every $\epsilon>0$ there exist $T>0$ and $u\in L^{\infty}([0,T],U)$ 
 such that, for every $\delta\in [a,b]$, the solution $\psi(\delta,\cdot):[0,T] \to \C^2$ of Equation~(\ref{robust}) with initial condition  $\psi(\delta,0)=e_1$  satisfies  $\forall \delta\in [a,b], \|\psi(\delta,T)-e^{i\theta_{\delta}}e_2\|<\epsilon$ where $\theta_{\delta}\in \R$.
 
\begin{proof}
Let $\alpha>1$ and $v,\phi\in C^{\infty}([0,1],\R)$ be such that $v(0)=v(1)=0$,  $\phi(0)=0$, $\phi'(0)\phi'(1)<0$, and $v(\tau)\neq 0$ for $\tau \in (0,1)$.
Let us consider $\tau=\epsilon^{\alpha+1}t\in [0,1]$ and $u_{\epsilon}(\tau)=\frac{2}{\epsilon}v(\tau)\cos(\frac{2E\tau}{\epsilon^{\alpha+1}}+\frac{1}{\epsilon}\phi(\tau))$.
For each $\delta\in [a,b]$, let $\psi_{\epsilon}(\delta,\tau)$ be the solution of 
\begin{equation*}
i\frac{d\psi(\delta,\tau)}{d\tau}=\begin{pmatrix}
\frac{E}{\epsilon^{\alpha+1}}&\delta u_{\epsilon}(\tau)\\
\delta u_{\epsilon}(\tau)&-\frac{E}{\epsilon^{\alpha+1}}
\end{pmatrix}
\psi(\delta,\tau),\quad \psi_{\epsilon}(\delta,0)=e_1.
\end{equation*} 
Apply the unitary transformation
$X_{\epsilon}(\delta,\tau)=U_{\epsilon}(\tau)\psi_{\epsilon}(\delta,\tau)$ where $U_{\epsilon}(\tau)=\begin{pmatrix}
e^{-i\left(\frac{E\tau}{\epsilon^{\alpha+1}}+\frac{\phi(\tau)}{2\epsilon}\right)}&0\\
0&e^{i\left(\frac{E\tau}{\epsilon^{\alpha+1}}+\frac{\phi(\tau)}{2\epsilon}\right)}\\
\end{pmatrix}$. Then
$X_{\epsilon}(\delta,\tau)$ satisfies 
\[\frac{dX_{\epsilon}(\delta,\tau)}{d\tau}=\left(\frac{1}{\epsilon}A^{\delta}(\tau)+\delta B_{\epsilon}(\tau)\right) X_{\epsilon}(\delta,\tau), \quad X_{\epsilon}(\delta,0)=e_1
 ,\]
 where $A^{\delta}(\tau)=-i\begin{pmatrix}
-\phi'(\tau)/2&\delta v(\tau)\\
\delta v(\tau)&\phi'(\tau)/2\\
\end{pmatrix} $ and $B_{\epsilon}(\tau)=
 \frac{-i}{\epsilon} 
\begin{pmatrix}
0&v(\tau)e^{i(\frac{4E\tau}{\epsilon^{\alpha+1}}+\frac{2 \phi(\tau)}{\epsilon})}\\
v(\tau)e^{-i(\frac{4E\tau}{\epsilon^{\alpha+1}} +\frac{2 \phi(\tau)}{\epsilon})}&0\\
\end{pmatrix}$.
By our choice of $v$ and $\phi$, $A^{\delta}(\tau)$ is $C^{\infty}$ w.r.t $(\tau,\delta)$ and satisfies (\ref{ugap}).
Applying Theorem \ref{UDrive}, we get that $\|\psi_{\epsilon}(\delta,1)-e^{i\theta_{\delta,\epsilon}}e_2\|\leq C \epsilon^{\text{min}(1,\alpha-1)}$ where $C>0$ is independent of $\delta,\epsilon$ and $\theta_{\delta,\epsilon}\in \R$.
The result follows.
\end{proof}
\end{teo}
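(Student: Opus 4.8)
The plan is to realize the transfer $e_1\mapsto e_2$ by a single chirped adiabatic pulse played in the rotating frame, carrying it out simultaneously for all $\delta\in[a,b]$ by appealing to the uniform driving estimate of Theorem~\ref{UDrive} in place of Theorem~\ref{Drive}. First I would fix $\alpha>1$ and choose $v,\phi\in C^\infty([0,1],\R)$ with $v(0)=v(1)=0$, $\phi(0)=0$, $\phi'(0)\phi'(1)<0$ and $v(\tau)\neq0$ for $\tau\in(0,1)$, and take as control the chirped pulse $u_\epsilon(t)=2\epsilon^\alpha v(\epsilon^{\alpha+1}t)\cos\bigl(2Et+\tfrac1\epsilon\phi(\epsilon^{\alpha+1}t)\bigr)$, which in the fast time $\tau=\epsilon^{\alpha+1}t$ appears, with the usual abuse of notation, as $\tfrac2\epsilon v(\tau)\cos\bigl(\tfrac{2E\tau}{\epsilon^{\alpha+1}}+\tfrac1\epsilon\phi(\tau)\bigr)$. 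Conjugating the $\delta$-dependent equation by the same diagonal unitary $U_\epsilon(\tau)$ as in the proof of Theorem~\ref{RWA}, I pass to $X_\epsilon(\delta,\tau)=U_\epsilon(\tau)\psi_\epsilon(\delta,\tau)$, which satisfies $\tfrac{d}{d\tau}X_\epsilon(\delta,\tau)=\bigl(\tfrac1\epsilon A^\delta(\tau)+\delta B_\epsilon(\tau)\bigr)X_\epsilon(\delta,\tau)$ with $A^\delta$ and $B_\epsilon$ as displayed; here $(\delta B_\epsilon)_{\epsilon>0}\in S(\alpha)$ for each fixed $\delta$, and $\delta$ runs over the fixed compact interval $[a,b]$.

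Next I would check the uniform gap. The eigenvalues of $iA^\delta(\tau)$ are $\pm\sqrt{\delta^2v(\tau)^2+\phi'(\tau)^2/4}$; since $v$ vanishes only at the endpoints, where $\phi'$ is nonzero because $\phi'(0)\phi'(1)<0$, and $\delta\ge a>0$, the map $(\tau,\delta)\mapsto\delta^2v(\tau)^2+\phi'(\tau)^2/4$ is continuous and strictly positive on the compact set $[0,1]\times[a,b]$, hence bounded below by a positive constant, so $A^\delta$ satisfies (\ref{ugap}). Theorem~\ref{UDrive} then gives $\|X_\epsilon(\delta,\tau)-\hat X_\epsilon(\delta,\tau)\|\le c\,\epsilon^{\min(1,\alpha-1)}$ uniformly in $(\tau,\delta)$, where $\hat X_\epsilon(\delta,\cdot)$ solves $\tfrac{d}{d\tau}\hat X_\epsilon=\tfrac1\epsilon A^\delta\hat X_\epsilon$ with $\hat X_\epsilon(\delta,0)=e_1$. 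Because $v(0)=v(1)=0$, the eigenvalue of $iA^\delta$ attached to $e_1$ changes sign between $\tau=0$ and $\tau=1$ (using $\phi'(0)\phi'(1)<0$), so, the two spectral branches being everywhere separated by (\ref{ugap}), the branch issuing from $e_1$ at $\tau=0$ terminates at $e_2$ at $\tau=1$; the standard adiabatic approximation — carried out with constants uniform in $\delta$, exactly as in the uniform version of Proposition~\ref{RWAveraging} underlying Theorem~\ref{UDrive} — then yields $\hat X_\epsilon(\delta,1)=e^{i\theta_{\delta,\epsilon}}e_2+O(\epsilon)$ uniformly in $\delta$ for suitable phases $\theta_{\delta,\epsilon}$. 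Combining the two estimates by the triangle inequality and undoing $U_\epsilon$, which fixes $e_2$ up to a phase, gives $\|\psi_\epsilon(\delta,1)-e^{i\theta_{\delta,\epsilon}}e_2\|\le C\,\epsilon^{\min(1,\alpha-1)}$ with $C$ independent of $\delta$.

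To finish, given a prescribed tolerance I would fix $\epsilon$ small enough that $C\epsilon^{\min(1,\alpha-1)}$ is below it, put $T=\epsilon^{-(\alpha+1)}$ and take $u=u_\epsilon$ on $[0,T]$; its amplitude is $2\epsilon^\alpha\max_{[0,1]}|v|$, which tends to $0$ with $\epsilon$ (since $\alpha>1>0$), so for $\epsilon$ small $u$ takes values in the neighbourhood $U$ of $0$, and $\psi(\delta,T)=\psi_\epsilon(\delta,1)$ is then within the tolerance of $e^{i\theta_{\delta,\epsilon}}e_2$ for every $\delta\in[a,b]$. I expect the main obstacle to be the \emph{uniformity in $\delta$} of the adiabatic step: one has to make sure that the $O(\epsilon)$ adiabatic error, the regularity of the eigenprojectors (Lemma~\ref{reg}), and the non-degeneracy of the endpoint configuration are all controlled by constants that do not depend on $\delta\in[a,b]$ — this is precisely what (\ref{ugap}) together with the compactness of $[a,b]$ provide, and it is what distinguishes this ensemble statement from the single-system Theorem~\ref{RWA}.
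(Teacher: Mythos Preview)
Your proposal is correct and follows essentially the same route as the paper: choose a chirped pulse $(v,\phi)$, pass to the rotating frame via $U_\epsilon$, obtain the $\frac{1}{\epsilon}A^\delta+\delta B_\epsilon$ system, verify (\ref{ugap}), and invoke Theorem~\ref{UDrive} together with the adiabatic following of the eigenstate. If anything, you are more explicit than the paper on three points the proof there leaves implicit: the actual computation showing $\delta^2 v^2+\phi'^2/4>0$ on $[0,1]\times[a,b]$, the separate adiabatic step $\hat X_\epsilon(\delta,1)=e^{i\theta_{\delta,\epsilon}}e_2+O(\epsilon)$ uniformly in $\delta$, and the verification that $|u_\epsilon|\le 2\epsilon^\alpha\max|v|$ eventually lands in the neighbourhood $U$.
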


\begin{figure}
\begin{center}
\includegraphics[scale=0.4]{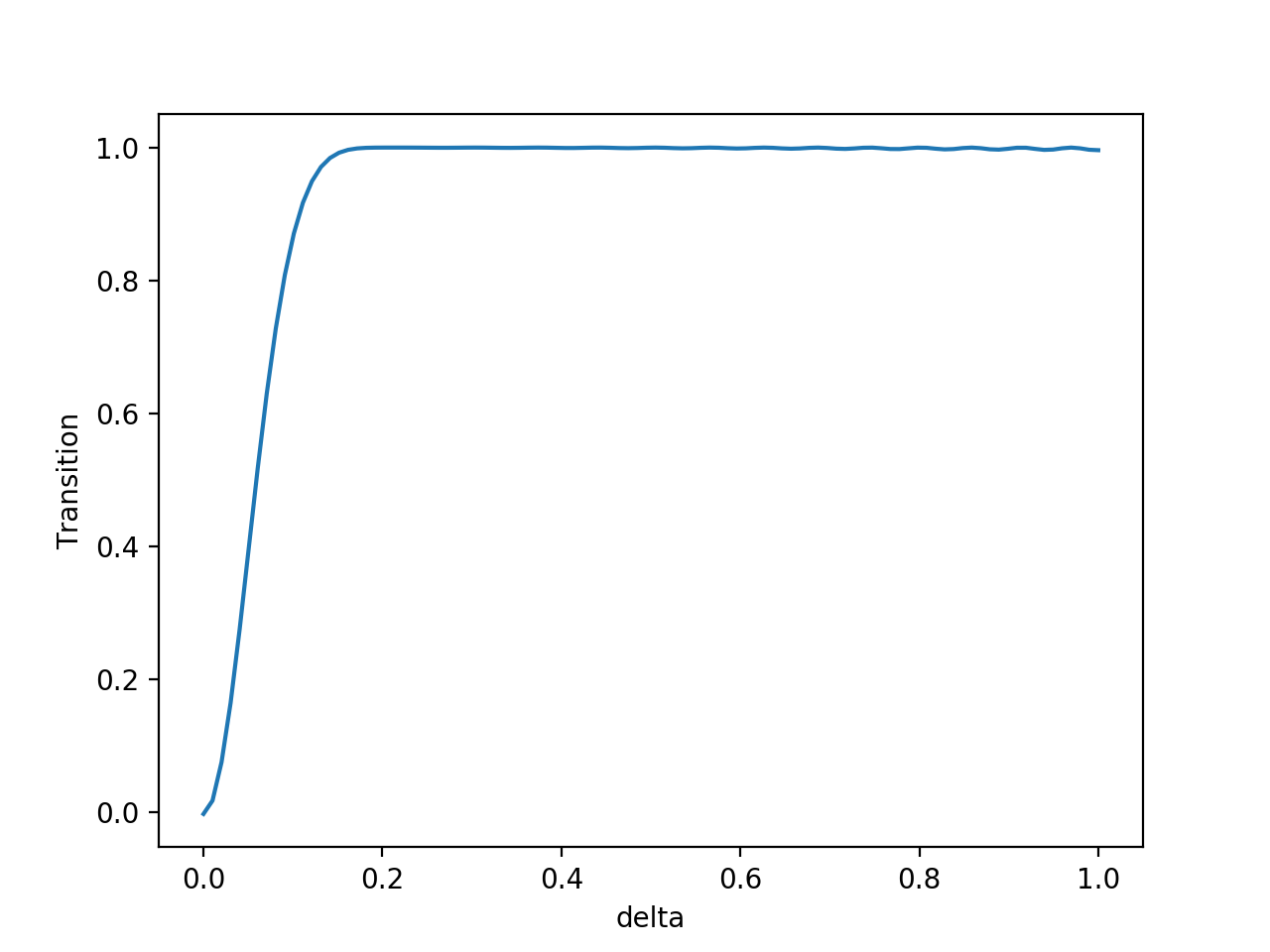}
\caption{Fidelity $|\langle \psi_{\epsilon}(\delta,1) , e_2
\rangle|^2$  as a function of the amplitude inhomogeneity $\delta$, with $\epsilon=0.01$, $\alpha=1.5$, and $E=1$.}
\label{inhomampli}
\end{center}
\end{figure}

\begin{figure}
\begin{center}
\includegraphics[scale=0.4]{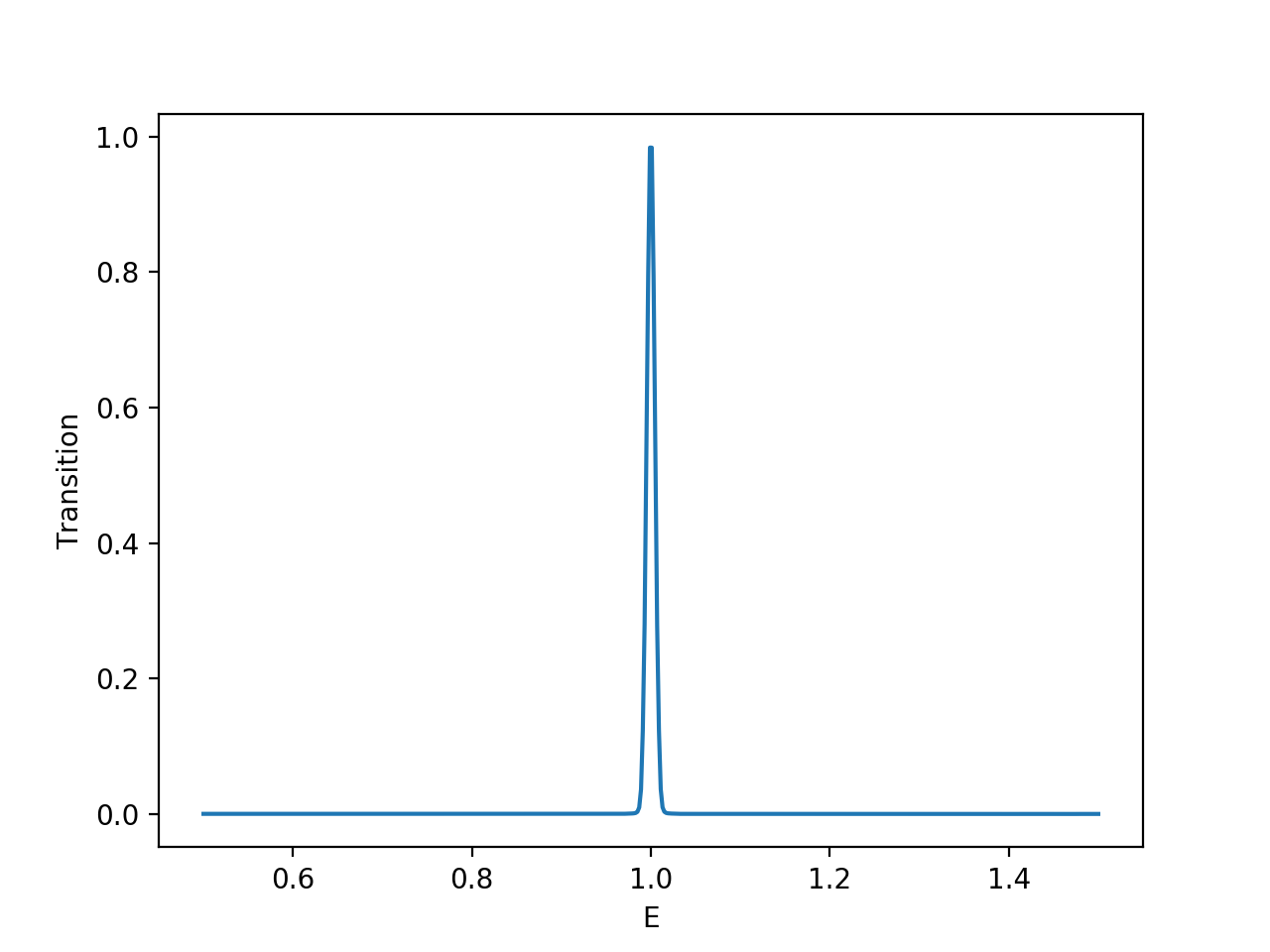}
\caption{Fidelity $|\langle \psi_{\epsilon}(E,1) , e_2
\rangle|^2$  as a function of the drift term  $E$, with $\epsilon=0.01$ and $\alpha=1.5$.}
\label{larmor}
\end{center}
\end{figure}
Consider the same $(v,\phi)$ as those chosen in Section \ref{control}. 
 For each $\delta\in [0,1]$,
let $\psi_{\epsilon}(\delta,\tau)$ be the solution of 
(\ref{robust}) with initial condition $\psi(\delta,0)=e_1$ and $E=1$.
We have plotted on Figure~\ref{inhomampli} the \emph{fidelity}, that is $|\langle \psi_{\epsilon}(\delta,1) , e_2
\rangle|^2$ for a dispersion $\delta$  of the amplitude of the control in $[0,1]$. On every sub-interval $[a,b]$ of $[0,1]$ with $a>0$, the fidelity converges uniformly to the constant function $\delta \mapsto 1$ when $\epsilon\to 0$.

Let now $\psi_{\epsilon}(E,\tau)$ be the solution of the equation
\begin{equation}
i\frac{d\psi_{\epsilon}(E,\tau)}{d\tau}=\begin{pmatrix}
\frac{E}{\epsilon^{\alpha+1}}& u_{\epsilon}(\tau)\\
 u_{\epsilon}(\tau)&-\frac{E}{\epsilon^{\alpha+1}}
\end{pmatrix}
\psi_{\epsilon}(E,\tau)
\end{equation}
where
$u_{\epsilon}(\tau)=\frac{2}{\epsilon}v(\tau)\cos(\frac{2\tau}{\epsilon^{\alpha+1}}+\frac{1}{\epsilon}\phi(\tau))$,
with initial condition $\psi(E,0)=e_1$ for every $E\in [\frac{1}{2},\frac{3}{2}]$.
We have plotted on Figure~\ref{larmor} the fidelity for a dispersion of $E$ in $[\frac{1}{2},\frac{3}{2}]$.
As already mentioned in the introduction, numerical simulations suggest that our method of control is not robust w.r.t. inhomogeneities of the resonance frequency $E$.

\bibliographystyle{abbrv}
\bibliography{bibli}

\end{document}